 \renewcommand{\div}{\mathop{\mathrm{div}}\nolimits}
\def\eps{\varepsilon}
\def\R{{\mathbb R} }
\def\r{\R^{n+1}_{+}}
\def\rn{\R^{n}}
\def\br{\partial\r}
\def\super{\overline}
\numberwithin{equation}{section}
\newtheorem{thm}{Theorem}[section]
\newtheorem{dfn}{Definition}[section]
\newtheorem{lemma}{Lemma}[section]
\newtheorem{remark}{Remark}[section]
\newtheorem{cor}{Corollary}[section]
\newtheorem{notation}{Notation}[section]
\begin{document}

\title{On stable solutions of the fractional H\'{e}non-Lane-Emden equation}

\author{Mostafa Fazly} 

\address{Department of Mathematical and Statistical Sciences, CAB 632, University of Alberta, Edmonton, Alberta, Canada T6G 2G1}

\email{fazly@ualberta.ca}

 \author{Juncheng Wei}
 
\address{Department of Mathematics, University of British Columbia, Vancouver, B.C. Canada V6T 1Z2.} 

\email{jcwei@math.ubc.ca}

\thanks{Both authors are partially supported by NSERC  grants.}

\maketitle

\begin{abstract} We derive monotonicity formulae for solutions of the fractional H\'{e}non-Lane-Emden equation \begin{equation*}
(-\Delta)^{s} u=|x|^a |u|^{p-1} u \ \ \ \text{in } \ \ \mathbb{R}^n, 
\end{equation*}
when $0<s<2$, $a>0$ and $p>1$. Then,  we apply these formulae to classify stable solutions of the above equation.
\end{abstract}

\tableofcontents

\section{Introduction and main results}
We study the classification of stable solutions of the following equation
\begin{equation}\label{main}
(-\Delta)^{s} u= |x|^a |u|^{p-1} u \ \ \ \text{in } \ \ \mathbb{R}^n, 
\end{equation}
where  $(-\Delta)^{s}$  is the fractional Laplacian operator for $0<s<2$.  Let us first provide the definition of stability.  
\begin{dfn}
We say that a solution $u$ of (\ref{main}) is stable  if
\begin{equation}\label{stability} \int_{\mathbb R^n} \int_{\mathbb R^n} \frac{ ( \phi(x)-\phi(y) )^2 }{|x-y|^{n+2s}} dx dy-p \int_{\mathbb R^n} |x|^a |u|^{p-1} \phi^2 \ge 0,
\end{equation}
 for any $\phi\in C_c^\infty(\mathbb R^n)$.
 \end{dfn}
For the local operator cases $s=1$, Laplacian operator, and $s=2$, bi-Laplacian operator, the classification of stable solutions  is completely known for $a\ge 0$. For the case of Laplacian operator that is when $s=1$, we refer interested readers to Farina \cite{f} when $a=0$ and to Cowan-Fazly \cite{cf}, Wang-Ye \cite{wy}, Dancer-Du-Guo \cite{dyg}, Du-Guo-Wang \cite{dgw} when  $a>-2$. In addition, for the fourth order Lane-Emden equation that is when $s=2$ we refer to Davila-Dupaigne-Wang-Wei \cite{ddww} when  $a=0$ and to Hu \cite{hu} when  $a>0$.   In this paper, we consider the fractional Laplacian operator $(-\Delta)^s$ when $0<s<2$ and $s\neq1$.  Note that $0<s<1$ and $1<s<2$ the classification of finite Morse index solutions of (\ref{main}) for $a=0$ are given by Davila-Dupaigne-Wei in \cite{ddw}  and by Fazly-Wei in \cite{fw}, respectively.

It is by now standard that the fractional Laplacian can be seen as a Dirichlet-to-Neumann operator for a degenerate but {\it local} diffusion operator in the higher-dimensional half-space $\R^{n+1}_{+}$. For the case of $0<s<1$ this in fact can be seen as the following theorem given by Caffarelli-Silvestre \cite{cs}, see also \cite{s}.
\begin{thm}\label{extension}
Take $s\in(0,1)$, $\sigma>s$ and $u\in C^{2\sigma}(\R^n)\cap L^1(\R^n,(1+\vert t\vert)^{n+2s}dt)$.
For $X=(x,y)\in\R^{n+1}_{+}$, let
\begin{equation}
 u_e(X)= \int_{\R^n} P(X,t)u(t)\;dt,
\end{equation}
where
\begin{equation}
P(X,t) = p_{n,s}\, t^{2s}\vert X-t\vert^{-(n+2s)}, 
\end{equation}
and $p_{n,s}$ is chosen so that $\int_{\R^n}P(X,t)\;dt=1$.
Then, $ u_e\in C^2(\r)~\cap~C(\super\r)$, $y^{1-2s}\partial_{y} u_e\in C(\super\r)$ and
\begin{equation}
\left\{
\begin{aligned}
\nabla\cdot(y^{1-2s}\nabla  u_e)&=0&\quad\text{in $\R^{n+1}_{+}$,}\\
 u_e&= u&\quad\text{on $\br$,}\\
-\lim_{y\to0} y^{1-2s}\partial_{y} u_e&= \kappa_s(-\Delta)^s u&\quad\text{on $\br$,}
\end{aligned}
\right.
\end{equation}
where
\begin{equation} \label{kappas}
\kappa_s = \frac{\Gamma(1-s)}{2^{2s-1} \Gamma(s)}.
\end{equation}
\end{thm}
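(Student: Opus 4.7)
The plan is to verify four conclusions in turn: (a) interior smoothness of $u_e$ in $\r$, (b) the degenerate elliptic equation, (c) the boundary trace, and (d) the weighted Neumann condition that identifies the fractional Laplacian together with the sharp constant $\kappa_s$.

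Steps (a)--(c) are direct. For (a) I would differentiate under the integral sign, using that $P(X,t)$ is smooth in $X$ for $y>0$ and that the integrability hypothesis $u\in L^1(\R^n,(1+|t|)^{n+2s}dt)$ gives absolute convergence of $u_e$ and its $X$-derivatives on compact subsets of $\r$. For (b) I would verify by a direct calculation that $P(X,t) = p_{n,s}\,y^{2s}|X-t|^{-(n+2s)}$ satisfies $\nabla_X\cdot(y^{1-2s}\nabla_X P)=0$ in $\r$ for each fixed $t\in\rn$; linearity then transfers the equation to $u_e$. For (c) the normalization $\int_{\rn}P(X,t)\,dt=1$ makes $P((x,y),\cdot)$ an approximate identity as $y\to 0^+$; combined with $u\in C^{2\sigma}(\rn)$ this yields $u_e\in C(\super\r)$ with trace $u$ on $\br$.

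Step (d) is the heart of the proof. My preferred route is to take a partial Fourier transform in the $x$ variable. Writing $v(\xi,y)$ for the Fourier transform of $u_e(\cdot,y)$, the extension equation reduces to the Bessel-type ODE $v_{yy}+\frac{1-2s}{y}v_y = |\xi|^2 v$ with $v(\xi,0)=\hat u(\xi)$, whose unique decaying solution is $v(\xi,y)=\hat u(\xi)\,\psi_s(y|\xi|)$ with universal profile $\psi_s(r)=c_s r^s K_s(r)$ normalized so that $\psi_s(0)=1$. Invoking the small-argument expansion $K_s(r)=\frac{\pi}{2\sin(\pi s)}\bigl[I_{-s}(r)-I_s(r)\bigr]$ with $I_{\pm s}(r)\sim (r/2)^{\pm s}/\Gamma(1\pm s)$, the leading $(r/2)^{-s}$ contribution is annihilated by the $y^{1-2s}$ weight after differentiation, while the subleading $(r/2)^s$ contribution produces $\kappa_s|\xi|^{2s}\hat u(\xi)$ in the limit. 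Inverting the Fourier transform then gives $-\lim_{y\to 0^+}y^{1-2s}\partial_y u_e = \kappa_s(-\Delta)^s u$ on $\br$.

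The principal obstacle is the arithmetic bookkeeping in step (d): it is the \emph{subleading} term of the Bessel expansion (not the leading one) that survives the limit, and one must invoke the reflection identity $\Gamma(s)\Gamma(1-s)=\pi/\sin(\pi s)$ together with $\Gamma(1+s)=s\Gamma(s)$ to reduce the resulting expression to the stated $\kappa_s = \Gamma(1-s)/(2^{2s-1}\Gamma(s))$. An alternative route that avoids Bessel functions altogether is to use $\int_{\rn}P(X,t)\,dt=1$ to rewrite $y^{1-2s}\partial_y u_e(x,y) = -y^{1-2s}\int_{\rn}(u(t)-u(x))\partial_y P(X,t)\,dt$ and pass $y\to 0^+$; this recovers the singular integral representation of $(-\Delta)^s u$ with the same constant $\kappa_s$, at the cost of a more delicate splitting of the domain into near- and far-field regions.
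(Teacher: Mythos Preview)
The paper does not prove this statement at all: it is quoted as the Caffarelli--Silvestre extension theorem and attributed to \cite{cs} (see also \cite{s}), with no argument given. Your sketch is a faithful outline of the standard proof from the original Caffarelli--Silvestre paper, in particular the Fourier-in-$x$ reduction to a Bessel ODE and the identification of $\kappa_s$ from the subleading term of the $K_s$ expansion; the alternative singular-integral route you mention is also classical. There is nothing to compare against in the present paper, but your proposal would stand on its own as a proof of the cited result.
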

Applying the above theorem to the fractional Henon-Lane-Emden equation (\ref{main}), we get the following equation in $\R^{n+1}_{+}$,
\begin{equation} \label{maine}
\left\{
\begin{aligned}
-\nabla \cdot ( y^{1-2s} \nabla  u_e ) &= 0 &\quad \text{in } \quad \r,  \\
-\lim_{y\to0} y^{1-2s} \partial_t  u_e &= \kappa_s |x|^a \vert  u_e\vert^{p-1} u_e
&\quad \text{in } \quad \mathbb R^n. 
\end{aligned}
\right.
\end{equation}
There are different ways of defining the fractional operator $(-\Delta)^{s}$ where $1<s<2$, just like the case of $0<s<1$. Applying the Fourier transform one can define the fractional Laplacian by
\begin{equation}
\widehat{ (-\Delta)^{s}}u(\zeta)=|\zeta|^{2s} \hat u(\zeta)  , 
\end{equation}
 or equivalently define this operator inductively by  $(-\Delta)^{s}=  (-\Delta)^{s-1} o (-\Delta)$, see \cite{rs}.   Recently, Yang in \cite{y} gave a characterization of the fractional Laplacian $(-\Delta)^{s}$, where $s$ is  any positive, noninteger number as the Dirichlet-to-Neumann map for a function $u_e$ satisfying a  higher order elliptic equation in the upper half space with one extra spatial dimension. This is a generalization of the work of Caffarelli and Silvestre in \cite{cs} for the case of $0<s<1$. We first fix the following notation then we present the Yang's characterization. See also Case-Chang \cite{cc} and Chang-Gonzales \cite{cg} for higher order fractional operators.

\begin{notation} Throughout this note set $b:=3-2s$ and define the operator 
\begin{equation} 
\Delta_b w:=\Delta w+\frac{b}{y} w_y=y^{-b} \div(y^b \nabla w),  
\end{equation}
for a function $w\in W^{2,2}(\mathbb R^{n+1},y^b)$.
\end{notation}

 As it is shown by Yang in \cite{y}, if $u(x)$ is a solution of (\ref{main}) then the extended function $u_e(x,y)$ where $x\in\mathbb R^n$ and $y\in\mathbb R^+$ satisfies
\begin{eqnarray}\label{maine}
 \left\{ \begin{array}{lcl}
\hfill \Delta^2_b u_e&=& 0   \ \ \text{in}\ \ \mathbb{R}^{n+1}_{+},\\
\hfill  \lim_{y\to 0} y^{b}\partial_y{u_e}&=& 0   \ \ \text{in}\ \ \partial\mathbb{R}^{n+1}_{+},\\
\hfill \lim_{y\to 0} y^{b} \partial_y \Delta_b u_e &=& C_{n,s} |x|^a |u|^{p-1} u  \ \ \text{in}\ \ \mathbb{R}^{n} . 
\end{array}\right.
\end{eqnarray}
Moreover, 
\begin{equation}
\int_{\mathbb R^n} |\xi|^{2s} |\hat{u(\xi)}|^2 d\xi=C_{n,s} \int_{\mathbb {R}^{n+1}_+} y^b |\Delta _b u_e(x,y)|^2 dx dy. 
\end{equation}     
Note that  $u(x)=u_e(x,0)$ in $\mathbb R^n$.  On the other hand,  Herbst in \cite{h} (see also \cite{ya}), shoed that when $n>2s$ the following Hardy inequality holds
\begin{equation} 
\int_{\mathbb R^n} |\xi|^{2s} |\hat \phi|^2 d\xi > \Lambda_{n,s} \int_{\mathbb R^n} |x|^{-2s} \phi^2 dx , 
\end{equation}
for any $\phi \in C_c^\infty(\mathbb R^n)$ where the optimal constant given by 
\begin{equation} 
 \Lambda_{n,s}=2^{2s}\frac{ \Gamma(\frac{n+2s}{4})^2  }{ \Gamma(\frac{n-2s}{4})^2}.\end{equation}
Here we fix a constant that plays an important role in the  classification of solutions of (\ref{main})
\begin{equation}
p_{S}(n,a)=\left\{
\begin{aligned}
+\infty&\quad\text{if $n\le 2s$}   , \\
\frac{n+2s+2a}{n-2s}&\quad\text{if $n> 2s$}. 
\end{aligned}
\right.
\end{equation}

\begin{remark}
Note that  for $p>p_{S}(n,a)$ the function
\begin{equation}
\label{sing sol}
u_s(x) = A |x|^{-\frac{2s+a}{p-1}} , 
\end{equation}
where
\begin{equation}
A^{p-1} = \lambda\left(\frac{n-2s}{2} -\frac{2s+a}{p-1}\right) , 
\end{equation}
for constant \begin{equation}
\lambda(\alpha) = 2^{2s} \frac{\Gamma(\frac{n+2s+2\alpha}{4}) \Gamma(\frac{n+2s-2\alpha}{4})}{\Gamma(\frac{n-2s-2\alpha}{4})\Gamma(\frac{n-2s+2\alpha}{4})} , 
\end{equation}
is a singular solution of (\ref{main}) where $0<s<2$.  For more details, we refer interested readers to \cite{fall} for the case of $0<s<1$ and to \cite{fw} for the case of $1<s<2$.
\end{remark}
We now present our main result; 
\begin{thm}\label{mainthm}
\label{thmstable}
Assume that $n\ge1$ and $0<s<\sigma<2$.
Let $u\in C^{2\sigma}(\R^n)\cap L^1(\R^n,(1+\vert y\vert)^{n+2s}dy)$ be a stable solution to (\ref{main}). Then for Sobolev subcritical exponents $1<p<p_{S}(n,a)$ and for Sobolev supercritical exponents $p_{S}(n,a)<p$ such that 
\begin{equation} \label{cond}
p \frac{\Gamma(\frac n2-\frac{s+\frac{a}{2}}{p-1}) \Gamma(s+\frac{s+\frac{a}{2}}{p-1})}{\Gamma(\frac{s+\frac{a}{2}}{p-1}) \Gamma(\frac{n-2s}{2} - \frac{s+\frac{a}{2}}{p-1})} > \frac{\Gamma(\frac{n+2s}{4})^2}{\Gamma(\frac{n-2s}{4})^2},
\end{equation}
 the solution $ u$ must be identically zero.   For the case of Sobolev critical exponent that is when  $p=p_{S}(n,a)$,  the solution $u$ has finite energy i.e.
\begin{equation} 
   \| u\|_{\dot H^s(\R^n)}^2 =\int_{\mathbb R^{n}}|x|^a |u|^{p+1}<+\infty.
   \end{equation}
If in addition $u$ is stable, then in fact $u$ must be identically zero. 
\end{thm}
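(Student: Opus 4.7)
The plan is to lift $u$ to its local extension $u_e$ on $\mathbb{R}^{n+1}_+$, using Theorem~\ref{extension} when $0<s<1$ and Yang's fourth-order system (\ref{maine}) when $1<s<2$, and then attack the classification through a Pohozaev-type monotonicity formula combined with Farina-type stability estimates, in the spirit of the local cases. First, set $\alpha = \frac{2s+a}{p-1}$, the natural scaling exponent so that $u^\lambda(x)=\lambda^\alpha u(\lambda x)$ solves the same equation. I would then construct a scale-invariant energy $E(r,u_e)$ combining the weighted Dirichlet part $\int_{B_r^+} y^{1-2s}|\nabla u_e|^2$ (or, for $s>1$, $\int_{B_r^+} y^b |\Delta_b u_e|^2$), the boundary nonlinear term $\frac{1}{p+1}\int |x|^a |u|^{p+1}$, and appropriate boundary correction integrals on $\partial B_r^+$. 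Differentiating in $r$, using the equation and a Pohozaev identity, the derivative $E'(r)$ should reduce to a nonnegative square on $\partial B_r^+$, giving monotonicity $E'(r)\ge 0$.

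For the subcritical range $1<p<p_S(n,a)$, I would test the stability inequality (\ref{stability}) with $\phi = |u|^{q-1}u\,\eta$ for a smooth cutoff $\eta$ and an admissible exponent $q$, integrate by parts against the equation, and derive an estimate of the form
\begin{equation*}
\int_{B_r} |x|^a |u|^{p+q} \;\le\; C\, r^{\,n - \gamma(p,q,s,a)},
\end{equation*}
where $\gamma>0$ exactly when $p$ lies strictly below $p_S(n,a)$; sending $r\to\infty$ forces $u\equiv 0$. At the critical value $p=p_S(n,a)$ the energy $E(r)$ is scale-invariant, so monotonicity combined with finiteness of $E$ at one scale immediately yields the finite-energy statement $\|u\|_{\dot H^s}^2 = \int |x|^a|u|^{p+1}<\infty$; a borderline Farina test, together with the $r$-homogeneity forced by $E' \equiv 0$ on the nontrivial set, then upgrades this finite-energy solution to $u\equiv 0$.

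The main new ingredient appears in the supercritical range. The key observation is that (\ref{cond}) is exactly the strict instability threshold $p A^{p-1}>\Lambda_{n,s}$ for the singular solution $u_s(x)=A|x|^{-\alpha}$: a direct bookkeeping of the Gamma quotients shows that the left side of (\ref{cond}) equals $pA^{p-1}/2^{2s}=p\lambda\bigl(\tfrac{n-2s}{2}-\alpha\bigr)/2^{2s}$, while the right side is $\Lambda_{n,s}/2^{2s}$. This strict surplus lets me pit the stability inequality against Herbst's Hardy inequality with a positive residual, upgrading the Farina estimate on $\int |x|^a|u|^{p+q}$ across the Sobolev threshold. Once the $L^{p+q}$ bound holds, the blow-down analysis closes as before: the rescalings $u^\lambda$ have uniform local bounds, the limits as $\lambda\to 0$ and $\lambda\to\infty$ are homogeneous (hence rigid) by monotonicity of $E$, and the integral bound forces these limits to be trivial, so the full $E$ is constant and $u$ itself is a zero-energy homogeneous solution, hence identically zero.

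The principal obstacle, as always in this line, will be the construction and sign verification of the monotonicity formula for $1<s<2$: Yang's fourth-order extension with the weight $y^b$ where $b=3-2s\in(-1,1)$ makes the Pohozaev bookkeeping delicate, since one must simultaneously control the weighted bi-Laplacian integral, mixed Dirichlet--Laplacian boundary terms on $\partial B_r^+$, and the nonlinear boundary contribution on $\{y=0\}$. The other nontrivial point is matching constants in (\ref{cond}) to the Hardy--stability comparison so that the exponent $\gamma(p,q,s,a)$ in the Farina estimate is strictly positive for some admissible $q$; this will require careful Gamma-function calculus, but should follow mechanically once the monotonicity formula is in hand.
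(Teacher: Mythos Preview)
Your overall architecture — extension to $\mathbb R^{n+1}_+$, a Pohozaev-type monotonicity formula for $E(r,u_e)$, energy estimates, and a blow-down to homogeneous limits — is exactly the paper's skeleton. But you misplace where condition (\ref{cond}) actually does work, and this creates a real gap. In the paper the supercritical energy bounds
\[
\int_{B_R\cap\partial\mathbb R^{n+1}_+}|x|^a|u|^{p+1}+\int_{B_R\cap\mathbb R^{n+1}_+}y^{1-2s}|\nabla u_e|^2 \le CR^{\,n-\frac{2s(p+1)+2a}{p-1}}
\]
(and the analogous $y^{3-2s}|\Delta_b u_e|^2$ bound for $1<s<2$) are obtained from stability tested only with $\phi=u\eta$, not with $\phi=|u|^{q-1}u\,\eta$, and they hold \emph{without} invoking (\ref{cond}) or Hardy at all. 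Your proposed Farina iteration with $\phi=|u|^{q-1}u\,\eta$ is precisely what the fractional setting makes intractable: there is no clean chain or product rule for $(-\Delta)^s$, so one cannot mimic the local computation that turns $\int|\nabla(|u|^{q-1}u\eta)|^2$ into something comparable with $\int|x|^a|u|^{p+2q-1}$. The ``Hardy surplus'' you describe therefore has nothing to bite on at the level of a general stable $u$.

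Where (\ref{cond}) is genuinely used is in a separate rigidity step that you do not state: the classification of \emph{homogeneous} stable solutions. One writes $u=r^{-\alpha}\psi(\theta)$, derives the spherical equation $\psi(\theta)A_{n,s,a}+\int_{\mathbb S^{n-1}}K_\alpha(\langle\theta,\sigma\rangle)(\psi(\theta)-\psi(\sigma))\,d\sigma=\psi^p(\theta)$, and then tests stability with $\phi=r^{-(n-2s)/2}\psi(\theta)\eta_\varepsilon(r)$; comparing the resulting spherical inequality against the equation and using that $K_\alpha$ is decreasing in $\alpha$ together with $pA_{n,s,a}>\Lambda_{n,s}$ (your reading of (\ref{cond}) as $pA^{p-1}>\Lambda_{n,s}$ is correct) forces $\psi\equiv0$. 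The blow-down limits $u_e^\infty$ are homogeneous by the monotonicity formula and remain stable, so \emph{this} lemma, not an integral bound, kills them. Your sentence ``the integral bound forces these limits to be trivial'' is the step that fails; replace it with the homogeneous-solution classification and the argument closes. (For the subcritical and critical cases the paper also proceeds differently from your plan, via a direct Pohozaev identity once $u\in\dot H^s\cap L^{p+1}$, rather than a Farina estimate; this is a smaller divergence but worth noting.)
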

Note  that in the absence of stability it is expected that the only nonnegative bounded solution of (\ref{main}) must be zero  for the subcritical exponents $1<p<p_S(n,a)$ where $a \ge0$.  To our knowledge not much is known about the classification of solutions when $a\neq 0$ even for the standard case $s=1$. For the case Laplacian operator $s=1$,  Phan-Souplet in \cite{ps} proved that the only nonnegative bounded solution of (\ref{main}) in three dimensions  must be zero  for the case of $1<p<p_S(n,a)$ and $a>-2$. Some partial results are given in \cite{gs}.

\section{The monotonicity formula}
Here is the monotonicity formula for the case of $0<s<1$.
\begin{thm}\label{mono1}
Suppose that $0<s<1$.  Let $ u_e\in C^2(\r)\cap C(\super\r)$ be  a solution of \eqref{main}  such that $y^{1-2s}\partial_{y} u_e\in C(\super\r)$. For $x_{0}\in\br$, $\lambda>0$,  let
\begin{eqnarray}
\nonumber E( u_e,\lambda) &:=&  \lambda^{\frac{2s(p+1)+2a}{p-1}-n}\left(\frac12\int_{\r\cap B_\lambda} y^{1-2s}\vert\nabla u_e \vert^2\;dx\,dy - \frac{\kappa_{s}}{p+1}\int_{\br\cap B_\lambda} |x|^a \vert  u_e \vert^{p+1}\;dx\right)\\
&&+ \lambda^{\frac{2s(p+1)+2a}{p-1}-n-1}\frac{s +\frac{a}{2}  }{p+1}\int_{\partial B_\lambda \cap\r}y^{1-2s} u_e^2\;d\sigma.
\end{eqnarray}
Then, $E$ is a nondecreasing function of $\lambda$. Furthermore,
\begin{equation} 
\frac{dE}{d\lambda} = \lambda^{\frac{2s(p+1)+a}{p-1}-n+1}\int_{\partial B(x_{0},\lambda)\cap\r}y^{1-2s}\left(\frac{\partial  u_e}{\partial r}+\frac{2s+a}{p-1}\frac { u_e}r\right)^2\;d\sigma. 
   \end{equation}
\end{thm}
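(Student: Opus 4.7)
The plan is to combine a scaling argument with a Pohozaev-type identity on the half-ball $B_\lambda\cap\r$. First I would introduce the natural self-similar rescaling
$$u^\lambda(X) := \lambda^{\alpha}\,u_e(\lambda X), \qquad \alpha := \frac{2s+a}{p-1}, \qquad X\in\overline{\r},$$
assuming the ball is centered at the origin, as forced by the $|x|^a$ factor. The exponent $\alpha$ is dictated by $\alpha(p-1)=2s+a$, which is precisely the condition that $u^\lambda$ again solves the extension system \eqref{maine}: $\nabla\cdot(y^{1-2s}\nabla u^\lambda)=0$ in $\r$ and $-\lim_{y\to 0}y^{1-2s}\partial_y u^\lambda=\kappa_s|x|^a|u^\lambda|^{p-1}u^\lambda$ on $\br$. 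A direct change of variables then shows that the prefactors $\lambda^{(2s(p+1)+2a)/(p-1)-n}$ and $\lambda^{(2s(p+1)+2a)/(p-1)-n-1}$ are exactly those rendering each of the three constituent integrals scale-invariant, giving
$$E(u_e,\lambda) \;=\; \bar E(u^\lambda),$$
where $\bar E$ is the same functional computed on the unit half-ball $B_1\cap\r$ and hemisphere $\partial B_1\cap\r$. Monotonicity of $E(u_e,\cdot)$ is therefore equivalent to nonnegativity of $\tfrac{d}{d\lambda}\bar E(u^\lambda)$.

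Second, I would differentiate $\bar E(u^\lambda)$ in $\lambda$, using the infinitesimal scaling identity $\partial_\lambda u^\lambda = \lambda^{-1}(\alpha u^\lambda + r\,\partial_r u^\lambda)$. The derivative of the Dirichlet piece yields $\int_{B_1\cap\r}y^{1-2s}\nabla u^\lambda\cdot\nabla(\partial_\lambda u^\lambda)\,dX$; an integration by parts together with $\nabla\cdot(y^{1-2s}\nabla u^\lambda)=0$ kills the bulk, while the contribution on $\{y=0\}\cap B_1$, rewritten via the Neumann condition, exactly cancels the derivative of the nonlinear boundary term of $\bar E$. What remains of these two pieces is the hemispherical integral $\int_{\partial B_1\cap\r}y^{1-2s}\partial_r u^\lambda\,\partial_\lambda u^\lambda\,d\sigma$. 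Differentiating the $L^2$ surface term on the fixed hemisphere $\partial B_1\cap\r$ supplies a further multiple of $\int_{\partial B_1\cap\r}y^{1-2s}u^\lambda\,\partial_\lambda u^\lambda\,d\sigma$, whose constant is determined by the prefactor in $E$.

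Substituting $\partial_\lambda u^\lambda=\lambda^{-1}(\alpha u^\lambda+\partial_r u^\lambda)$ and expanding produces a quadratic form in $\partial_r u^\lambda$ and $u^\lambda$ on the hemisphere; the coefficient in front of the $L^2$ surface piece in $E$ is chosen precisely so that the resulting cross and square terms assemble into the perfect square $(\partial_r u^\lambda + \alpha u^\lambda)^2$, giving
$$\frac{d}{d\lambda}\bar E(u^\lambda) \;=\; \frac{1}{\lambda}\int_{\partial B_1\cap\r}y^{1-2s}\bigl(\partial_r u^\lambda + \alpha u^\lambda\bigr)^2\,d\sigma \;\ge 0.$$
Undoing the rescaling $\tilde X=\lambda X$ on the hemisphere translates this into the advertised integral over $\partial B(x_0,\lambda)\cap\r$ with integrand $\bigl(\partial_r u_e + \tfrac{2s+a}{p-1}\tfrac{u_e}{r}\bigr)^2$, multiplied by the appropriate power of $\lambda$. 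The main obstacle is the algebraic bookkeeping: one must verify the underlying Pohozaev-type identity obtained by multiplying $\nabla\cdot(y^{1-2s}\nabla u_e)=0$ by the scaling multiplier $X\cdot\nabla u_e + \alpha u_e$ and integrating on $B_\lambda\cap\r$, and then check that every $\lambda$-power and numerical coefficient line up so that the remainder is exactly a perfect square. The $C^{2\sigma}$ regularity assumption on $u$ ensures that each boundary integration by parts against the degenerate weight $y^{1-2s}$ at $y=0$ is justified.
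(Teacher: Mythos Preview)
Your proposal is correct and follows essentially the same route as the paper: introduce the self-similar rescaling $u^\lambda(X)=\lambda^{(2s+a)/(p-1)}u_e(\lambda X)$, reduce to differentiating the fixed-domain functional on $B_1\cap\r$, integrate the Dirichlet piece by parts so that the $\{y=0\}$ contribution cancels the derivative of the nonlinear boundary term, and then use $\partial_r u^\lambda=\lambda\partial_\lambda u^\lambda-\alpha u^\lambda$ on $\partial B_1$ together with the surface $L^2$ term to complete the square. The only organizational difference is that the paper differentiates the bulk part $I(u_e,\lambda)$ first and then identifies the required surface correction, whereas you differentiate the full $E$ at once; the computations are identical.
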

\begin{proof}   Let
\begin{equation} \label{I}
I(u_e,\lambda) = \lambda^{2s\frac{p+1}{p-1}-n}\left(\int_{\r\cap B_\lambda} y^{1-2s}\frac{\vert\nabla u_e\vert^2}{2}dx\,dy -\frac{\kappa_{s}}{p+1} \int_{\br\cap B_\lambda} |x|^a \vert u_e \vert^{p+1}dx\right). 
\end{equation}
Now for $X\in\r$, define
\begin{equation}\label{ulambdas1}
u^\lambda_e(X) : = \lambda^{\frac{2s+a}{p-1}} u_e(\lambda X).
\end{equation}
Then, $u^\lambda_e$ solves \eqref{maine} and in addition
\begin{equation}\label{t3}
I(u_e,\lambda) = I(u^\lambda_e,1).
\end{equation}
Taking partial derivatives we get
\begin{equation} \label{lambdader}
\lambda \partial_\lambda u^\lambda_e = \frac{2s+a}{p-1} u^\lambda_e + r\partial_r u^\lambda_e.
\end{equation}
Differentiating the operator \eqref{I} with respect to $\lambda$, we find
$$
\partial_\lambda I(u_e,\lambda) = \int_{\r\cap B_1}y^{1-2s}\nabla u^\lambda_e\cdot\nabla\partial_\lambda u^\lambda_e dx\,dy -\kappa_{s}\int_{\br\cap B_1}|x|^a \vert u^\lambda_e \vert^{p-1}\partial_\lambda u^\lambda_e dx.
$$
Integrating by parts and then using \eqref{lambdader},
\begin{align*}
\partial_\lambda I(u_e,\lambda)  &= \int_{\partial B_1\cap\r}y^{1-2s} \partial_r  u^\lambda_e \partial_\lambda  u^\lambda_e d\sigma\\
&= \lambda \int_{\partial B_1\cap\r}y^{1-2s} (\partial_\lambda  u^\lambda_e)^2 d\sigma - \frac{2s+a}{p-1}\int_{\partial B_1\cap\r}y^{1-2s}  u^\lambda_e \partial_\lambda  u^\lambda_e  d\sigma\\
&= \lambda \int_{\partial B_1\cap\r} y^{1-2s} (\partial_\lambda  u^\lambda_e)^2 d\sigma - \frac{s+\frac{a}{2}}{p-1}\partial_\lambda \left(\int_{\partial B_1\cap\r}y^{1-2s} ( u^\lambda_e)^2 \;d\sigma\right) . 
\end{align*}
This implies that 
\begin{equation}
\partial_\lambda\left[ I(u_e,\lambda)+ \frac{s+\frac{a}{2}}{p-1}\int_{\partial B_1\cap\r}y^{1-2s} ( u^\lambda_e)^2 \;d\sigma\right] = \lambda \int_{\partial B_1\cap\r} y^{1-2s} (\partial_\lambda  u^\lambda_e)^2 d\sigma . 
\end{equation}
Applying the scaling (\ref{ulambdas1}) completes the proof. 
\end{proof}
We now consider the case of $1<s<2$ and $a>0$ and we drive a monotonicity formula for this case.  Note that when $a=0$ a monotonicity formula is given for $s=2$ and $1<s<2$ by Davila-Dupaigne-Wang-Wei in \cite{ddww} and Fazly-Wei in \cite{fw}, respectively. We first define the energy functional
\begin{eqnarray*}\label{energy}
E(u_e,r)& :=& r^{2s\frac{p+1}{p-1}-n} \left(   \int_{  \mathbb{R}^{n+1}_{+}\cap B_r} \frac{1}{2} y^{3-2s}|\Delta_b u_e|^2-  \frac{C_{n,s}}{p+1} \int_{  \partial\mathbb{R}^{n+1}_{+}\cap B_r} |x|^a u_e^{p+1}   \right)\\
&&-\frac{s+\frac{a}{2}}{p-1}\left(  \frac{p+2s+a-1}{p-1} -n-b \right)  r^{-3+2s+\frac{4s+2a}{p-1}-n}  \int_{  \mathbb{R}^{n+1}_{+}\cap \partial B_r} y^{3-2s} u_e^2 \\
&&-\frac{s+\frac{a}{2}}{p-1}\left(  \frac{p+2s+a-1}{p-1} -n-b \right) \frac{d}{dr} \left[ r^{\frac{4s+2a}{p-1}+2s-2-n}  \int_{  \mathbb{R}^{n+1}_{+}\cap \partial B_r} y^{3-2s} u_e^2 \right]\\
&&+ \frac{1}{2} r^3   \frac{d}{dr} \left[ r^{\frac{4s+2a}{p-1}+2s-3-n}  \int_{  \mathbb{R}^{n+1}_{+}\cap \partial B_r} y^{3-2s} \left(  \frac{2s+a}{p-1} r^{-1} u+ \frac{\partial u_e}{\partial r}\right)^2 \right]\\
&&+ \frac{1}{2}    \frac{d}{dr} \left[ r^{\frac{2s(p+1)+2a}{p-1}-n}  \int_{  \mathbb{R}^{n+1}_{+}\cap \partial B_r} y^{3-2s}\left(  | \nabla u_e|^2 - \left|\frac{\partial u_e}{\partial r}\right|^2 \right) \right]\\
&&+ \frac{1}{2}    r^{\frac{2s(p+1)+2a}{p-1}-n-1}  \int_{  \mathbb{R}^{n+1}_{+}\cap \partial B_r} y^{3-2s} \left(  | \nabla u_e|^2 - \left|\frac{\partial u_e}{\partial r}\right|^2 \right). 
\end{eqnarray*}
For the above energy functional we have provide the following monotonicity formula. 
\begin{thm}\label{mono}
Assume that $n>\frac{p+4s+2a-1}{p+2s+a-1}+ \frac{2s+a}{p-1}-b$. Then, $E(u_e,\lambda)$ is a nondecreasing function of $\lambda>0$. Furthermore,
\begin{equation}
\frac{dE(\lambda,u_e)}{d\lambda} \ge C(n,s,p) \  \lambda^{\frac{4s+2a}{p-1}+2s-2-n}    \int_{  \mathbb{R}^{n+1}_{+}\cap \partial B_\lambda}  y^{3-2s}\left(  \frac{2s+a}{p-1} r^{-1} u+ \frac{\partial u_e}{\partial r}\right)^2, 
\end{equation}
where $C(n,s,p)$ is independent from $\lambda$.
\end{thm}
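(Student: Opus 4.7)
The argument parallels that of Theorem~\ref{mono1} but must be adapted to the fourth-order extension $\Delta_b^2 u_e = 0$ of Yang. The underlying scaling is identical: introduce
\begin{equation*}
u_e^\lambda(X) := \lambda^{\frac{2s+a}{p-1}} u_e(\lambda X), \qquad X \in \overline{\mathbb{R}^{n+1}_+},
\end{equation*}
which again satisfies \eqref{maine} and enjoys $\lambda\partial_\lambda u_e^\lambda = \tfrac{2s+a}{p-1} u_e^\lambda + r\partial_r u_e^\lambda$. The natural starting point is the scale-invariant base functional
\begin{equation*}
\bar I(u_e,\lambda) := \lambda^{2s\frac{p+1}{p-1}-n}\left(\frac12\int_{\mathbb{R}^{n+1}_+\cap B_\lambda} y^{3-2s}|\Delta_b u_e|^2 - \frac{C_{n,s}}{p+1}\int_{\partial\mathbb{R}^{n+1}_+\cap B_\lambda} |x|^a u_e^{p+1}\right),
\end{equation*}
which is just the first line of $E$. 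Since $\bar I(u_e,\lambda) = \bar I(u_e^\lambda,1)$, one differentiates the right-hand side directly in $\lambda$.

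Two successive integrations by parts of the Dirichlet term on $B_1 \cap \mathbb{R}^{n+1}_+$ against the weight $y^b$ produce a bulk integral containing $\Delta_b^2 u_e^\lambda$, which vanishes by the extension equation, plus two contributions from the bottom face $\{y=0\}$. The first vanishes because $\lim_{y\to 0} y^b \partial_y u_e^\lambda = 0$, while the second uses $\lim_{y\to 0} y^b \partial_y \Delta_b u_e^\lambda = C_{n,s} |x|^a |u_e^\lambda|^{p-1} u_e^\lambda$ and is cancelled precisely by the derivative of the nonlinear part of $\bar I$. What remains is a pure boundary integral on $\partial B_1 \cap \mathbb{R}^{n+1}_+$ of the schematic form
\begin{equation*}
\int_{\partial B_1 \cap \mathbb{R}^{n+1}_+} y^{b}\bigl[(\Delta_b u_e^\lambda)\,\partial_r\partial_\lambda u_e^\lambda \;-\; \partial_r(\Delta_b u_e^\lambda)\,\partial_\lambda u_e^\lambda\bigr]\, d\sigma.
\end{equation*}

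I would then substitute $\partial_\lambda u_e^\lambda$ and $\partial_r\partial_\lambda u_e^\lambda$ via the scaling identity and expand $\Delta_b u_e^\lambda$ on the unit sphere in spherical coordinates as $\partial_r^2 u_e^\lambda + \tfrac{n+b}{r}\partial_r u_e^\lambda + \tfrac{1}{r^2}\Delta_\theta u_e^\lambda$. Algebraic reorganization (in the spirit of \cite{ddww,fw}) then turns the boundary integral into a sum of three kinds of pieces: first, a non-negative multiple of $\int y^b\bigl(\tfrac{2s+a}{p-1}r^{-1}u_e + \partial_r u_e\bigr)^2\, d\sigma$; second, $\lambda$-derivatives of $\int y^b u_e^2$, of $\int y^b(|\nabla u_e|^2 - (\partial_r u_e)^2)$, and of $\int y^b\bigl(\tfrac{2s+a}{p-1}r^{-1}u_e + \partial_r u_e\bigr)^2$; and third, undifferentiated copies of $\int y^b u_e^2$ and of $\int y^b(|\nabla u_e|^2 - (\partial_r u_e)^2)$. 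By construction, each of the five correction terms appended to $\bar I$ in the definition of $E$ is designed precisely to cancel one of the pieces from the second or third group, leaving only the perfect-square contribution of the first group.

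The main obstacle is purely computational: carefully bookkeeping the many boundary contributions from the two weighted integrations by parts and verifying that the residual coefficient in front of the perfect-square term---a rational function of $n,s,p,a,b$---is strictly positive. This positivity amounts precisely to the dimensional hypothesis $n > \tfrac{p+4s+2a-1}{p+2s+a-1} + \tfrac{2s+a}{p-1} - b$, which yields the constant $C(n,s,p)$ stated in the theorem.
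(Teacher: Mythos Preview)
Your proposal is correct and follows essentially the same route as the paper: define $\bar E(u_e,\lambda)=\bar E(u_e^\lambda,1)$, differentiate in $\lambda$, integrate by parts twice using $\Delta_b^2 u_e^\lambda=0$ and the two Neumann conditions to reduce to the boundary expression $\int_{\partial B_1\cap\mathbb R^{n+1}_+}y^b\bigl[v_e^\lambda\,\partial_r\partial_\lambda u_e^\lambda-\partial_r v_e^\lambda\,\partial_\lambda u_e^\lambda\bigr]$, then expand $v_e^\lambda=\Delta_b u_e^\lambda$ in spherical variables and reorganize so that the correction terms of $E$ absorb all total $\lambda$-derivatives while the leftover is a nonnegative multiple of $(\partial_\lambda u_e^\lambda)^2$ with coefficient $2(\alpha-\beta-1)$, whose positivity is exactly the dimensional hypothesis. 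The only detail you might add explicitly is that, besides this term, the paper also drops two further nonnegative contributions---$2\lambda(\lambda\partial_\lambda^2 u_e^\lambda+\partial_\lambda u_e^\lambda)^2$ and $2\lambda|\nabla_\theta\partial_\lambda u_e^\lambda|^2$---when passing from the identity to the inequality.
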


\noindent{\bf Proof:}  Set 
\begin{equation}
\bar E(u_e,\lambda):= \lambda^{\frac{2s(p+1)+2a}{p-1}-n} \left(   \int_{  \mathbb{R}^{n+1}_{+}\cap B_\lambda} \frac{1}{2} y^{b} |\Delta_b u_e|^2 dx dy -  \frac{C_{n,s}}{p+1} \int_{  \partial\mathbb{R}^{n+1}_{+}\cap B_\lambda} |x|^a u_e^{p+1}   \right) . 
\end{equation}
 Define $v_e:=\Delta_b u_e$, $u_e^\lambda(X):=\lambda^{\frac{2s+a}{p-1}} u_e(\lambda X)$, and $v_e^\lambda(X):=\lambda^{\frac{2s+a}{p-1}+2} v_e(\lambda X)$ where $X=(x,y)\in\mathbb{R}^{n+1}_+$.  Therefore, $\Delta_b u_e^\lambda(X)=v_e^\lambda(X)$ and
 \begin{eqnarray}\label{mainex}
 \left\{ \begin{array}{lcl}
\hfill \Delta_b v^\lambda_e&=& 0   \ \ \text{in}\ \ \mathbb{R}^{n+1}_{+},\\
\hfill  \lim_{y\to 0} y^{b}\partial_y{u^\lambda_e}&=& 0   \ \ \text{in}\ \ \partial\mathbb{R}^{n+1}_{+},\\
\hfill \lim_{y\to 0} y^{b} \partial_y v_e^\lambda &=& C_{n,s} |x|^a {(u^\lambda_e)}^p  \ \ \text{in}\ \ \mathbb{R}^{n}. 
\end{array}\right.
\end{eqnarray}
In addition,  differentiating with respect to $\lambda$ we have
 \begin{equation}\label{uvl}
 \Delta_b \frac{du_e^\lambda}{d\lambda}=\frac{dv_e^\lambda}{d\lambda}.
 \end{equation}
Note that $$\bar E(u_e,\lambda)=\bar E(u_e^\lambda,1)= \int_{  \mathbb{R}^{n+1}_{+}\cap B_1} \frac{1}{2} y^b  (v_e^\lambda)^2 dx dy -  \frac{C_{n,s}}{p+1} \int_{  \partial\mathbb{R}^{n+1}_{+}\cap B_1} |x|^a |u_e^\lambda|^{p+1}  .   $$
Taking derivate of the energy with respect to $\lambda$, we have
 \begin{eqnarray}\label{energy}
\frac{d\bar E(u_e^\lambda,1)}{d\lambda}= \int_{  \mathbb{R}^{n+1}_{+}\cap B_1} y^b v_e^\lambda \frac{dv_e^\lambda}{d\lambda}\  dx dy -  C_{n,s} \int_{  \partial\mathbb{R}^{n+1}_{+}\cap B_1}  |x|^a |u_e^\lambda|^{p}  \frac{du_e^\lambda}{d\lambda} . 
\end{eqnarray}
Using (\ref{mainex}) we end up with
\begin{eqnarray}\label{energy}
\frac{d\bar E(u_e^\lambda,1)}{d\lambda}= \int_{  \mathbb{R}^{n+1}_{+}\cap B_1}  y^b v_e^\lambda \frac{dv_e^\lambda}{d\lambda}\  dx dy -  \int_{  \partial\mathbb{R}^{n+1}_{+}\cap B_1}  \lim_{y\to 0} y^{b}  \partial_y v_e^\lambda   \frac{du_e^\lambda}{d\lambda} . 
\end{eqnarray}
From (\ref{uvl}) and by  integration by parts we have
 \begin{eqnarray*}
 \int_{  \mathbb{R}^{n+1}_{+}\cap B_1} y^b v_e^\lambda \frac{dv_e^\lambda}{d\lambda}
&=&  \int_{  \mathbb{R}^{n+1}_{+}\cap B_1} y^b \Delta_b u_e^\lambda \Delta_b \frac{du_e^\lambda}{d\lambda} \\&=&  - \int_{  \mathbb{R}^{n+1}_{+}\cap B_1} \nabla \Delta_b u^\lambda_e \cdot \nabla \left(\frac{du^\lambda_e}{d \lambda}\right) y^b + \int_{ \partial( \mathbb{R}^{n+1}_{+}\cap B_1)} \Delta_b u_e^\lambda y^b \partial_{\nu} \left(   \frac{du^\lambda_e}{d \lambda}  \right) . 
\end{eqnarray*}  
Note that
 \begin{eqnarray*}
-\int_{  \mathbb{R}^{n+1}_{+}\cap B_1} \nabla \Delta_b u_e\cdot \nabla \frac{du^\lambda_e}{d \lambda} y^b &=& \int_{  \mathbb{R}^{n+1}_{+}\cap B_1} \div( \nabla\Delta_b u^\lambda_e y^b) \frac{du^\lambda_e}{d \lambda} -  \int_{ \partial( \mathbb{R}^{n+1}_{+}\cap B_1)} y^b   \partial_\nu (\Delta_b u^\lambda_e )  \frac{du^\lambda_e}{d \lambda}
\\& =&\int_{  \mathbb{R}^{n+1}_{+}\cap B_1} y^b  \Delta_b^2 u^\lambda_e  \frac{du^\lambda_e}{d \lambda} -  \int_{ \partial( \mathbb{R}^{n+1}_{+}\cap B_1)} y^b   \partial_\nu (\Delta_b u^\lambda_e )  \frac{du^\lambda_e}{d \lambda}
\\& =&-  \int_{ \partial( \mathbb{R}^{n+1}_{+}\cap B_1)} y^b   \partial_\nu (\Delta_b u^\lambda_e )  \frac{du^\lambda_e}{d \lambda} . 
\end{eqnarray*}
Therefore,
 \begin{eqnarray*}
 \int_{  \mathbb{R}^{n+1}_{+}\cap B_1} y^b v_e^\lambda \frac{dv_e^\lambda}{d\lambda}
&=& \int_{ \partial( \mathbb{R}^{n+1}_{+}\cap B_1)} \Delta_b u_e^\lambda y^b \partial_{\nu} \left(   \frac{du^\lambda_e}{d \lambda}  \right) -  \int_{ \partial( \mathbb{R}^{n+1}_{+}\cap B_1)} y^b   \partial_\nu (\Delta_b u^\lambda_e )  \frac{du^\lambda_e}{d \lambda} . 
\end{eqnarray*}
Boundary of $\mathbb{R}^{n+1}_{+}\cap B_1$ consists of   $\partial\mathbb{R}^{n+1}_{+}\cap B_1$ and  $\mathbb{R}^{n+1}_{+}\cap \partial B_1$. Therefore,
\begin{eqnarray*}
 \int_{  \mathbb{R}^{n+1}_{+}\cap B_1} y^b v_e^\lambda \frac{dv_e^\lambda}{d\lambda}
&=&  \int_{  \partial\mathbb{R}^{n+1}_{+}\cap B_1} - v_e^\lambda \lim_{y\to 0}  y^b \partial_y\left (\frac{du_e^\lambda}{d\lambda}\right) +   \lim_{y\to 0} y^b \partial_y v_e^\lambda \frac{du_e^\lambda}{d\lambda} \\&&+ \int_{  \mathbb{R}^{n+1}_{+}\cap \partial B_1} y^b v_e^\lambda \partial_r \left (\frac{du_e^\lambda}{d\lambda}\right) -  y^b \partial_r v_e^\lambda \frac{du_e^\lambda}{d\lambda} , 
\end{eqnarray*}
where $r=|X|$, $X=(x,y)\in \mathbb{R}^{n+1}_+$ and $\partial_r=\nabla\cdot \frac{X}{r}$ is the corresponding radial derivative.   Note that the first integral in the right-hand side vanishes since $\partial_y\left (\frac{du_e^\lambda}{d\lambda}\right)=0$ on $ \partial\mathbb{R}^{n+1}_{+}$. From (\ref{energy}) we obtain
 \begin{eqnarray}\label{energy2}
\frac{d\bar E(u_e^\lambda,1)}{d\lambda}= \int_{  \mathbb{R}^{n+1}_{+}\cap \partial B_1}  y^b\left ( v_e^\lambda \partial_r \left (\frac{du_e^\lambda}{d\lambda}\right) -  \partial_r \left(v_e^\lambda\right) \frac{du_e^\lambda}{d\lambda} \right)  . 
\end{eqnarray}
Now note that from the definition of $u_e^\lambda$ and $v_e^\lambda$ and by differentiating in $\lambda $ we get the following for $X\in\mathbb{R}^{n+1}_+$
\begin{eqnarray}\label{ulambda}
\frac{du_e^\lambda(X)}{d\lambda}&=&\frac{1}{\lambda} \left(  \frac{2s+a}{p-1} u_e^\lambda(X) +r \partial_r u_e^\lambda(X) \right) , 
\\    
\label{vlambda} 
\frac{dv_e^\lambda(X)}{d\lambda}&=&\frac{1}{\lambda} \left(  \frac{2(p+s-1)+a}{p-1} v_e^\lambda(X) +r \partial_r v_e^\lambda(X) \right)  . 
\end{eqnarray}
Therefore, differentiating with respect to $\lambda $ we get
\begin{eqnarray*}
\lambda \frac{d^2u_e^\lambda(X)}{d\lambda^2} + \frac{du_e^\lambda(X)}{d\lambda}=  \frac{2s+a}{p-1} \frac{du_e^\lambda(X)}{d\lambda} +r \partial_r \frac{du_e^\lambda(X)}{d\lambda} . 
\end{eqnarray*}
 From this for all $X\in\mathbb{R}^{n+1}_+\cap \partial B_1$ we get 
\begin{eqnarray}
\label{u1lambda}
 \partial_r \left(u_e^\lambda(X)\right)
&=& \lambda \frac{du_e^\lambda(X)}{d\lambda} -\frac{2s+a}{p-1} u_e^\lambda(X) , 
\\\label{u2lambda}
 \partial_r \left(\frac{du_e^\lambda(X)}{d\lambda}\right)
&= &\lambda \frac{d^2u_e^\lambda(X)}{d\lambda^2} +\frac{p-1-2s-a}{p-1} \frac{du_e^\lambda(X)} {d\lambda} , 
\\\label{v2lambda}
\partial_r \left( v_e^\lambda(X)\right) &=& \lambda \frac{dv_e^\lambda(X)}{d\lambda}- \frac{2(p+s-1)+a}{p-1} v_e^\lambda(X) . 
\end{eqnarray}
Substituting (\ref{u2lambda}) and (\ref{v2lambda}) in (\ref{energy2}) we get
 \begin{eqnarray}\label{energyder}
\nonumber \frac{d\bar E(u_e^\lambda,1)}{d\lambda}&=&
\int_{  \mathbb{R}^{n+1}_{+}\cap \partial B_1} y^b v_e^\lambda  \left (   \lambda \frac{d^2u_e^\lambda}{d\lambda^2} +\frac{p-1-2s-a}{p-1} \frac{du_e^\lambda}{d\lambda}\right) 
\\&& \nonumber-   \int_{  \mathbb{R}^{n+1}_{+}\cap \partial B_1} y^b \left( \lambda \frac{dv_e^\lambda}{d\lambda}- \frac{2(p+s-1)+a}{p-1} v_e^\lambda \right) \frac{du_e^\lambda}{d\lambda}
\\&=&  \int_{  \mathbb{R}^{n+1}_{+}\cap \partial B_1} y^b \left( \lambda  v_e^\lambda    \frac{d^2u_e^\lambda}{d\lambda^2} +3  v_e^\lambda \frac{du_e^\lambda}{d\lambda}    -    \lambda \frac{dv_e^\lambda}{d\lambda} \frac{du_e^\lambda}{d\lambda} \right) .
\end{eqnarray}
Taking derivative of (\ref{ulambda}) in $r$ we get
\begin{equation}
 r \frac{\partial^2 u_e^\lambda}{\partial r^2}+ \frac{\partial u_e^\lambda}{\partial r}= \lambda \frac{\partial}{\partial r}\left(\frac{du_e^\lambda}{d\lambda} \right) - \frac{2s+a}{p-1}  \frac{\partial u_e^\lambda}{\partial r} . 
\end{equation}
From this and (\ref{u2lambda}) for all $X\in\mathbb{R}^{n+1}_+\cap \partial B_1$ we have
 \begin{eqnarray}\label{2ru}
\nonumber \frac{\partial^2 u_e^\lambda}{\partial r^2} &=& \lambda \frac{\partial}{\partial r}\left(\frac{du_e^\lambda}{d\lambda} \right) -  \frac{p+2s+a-1}{p-1}  \frac{\partial u_e^\lambda}{\partial r} \\&=& \nonumber \lambda \left(  \lambda \frac{d^2u_e^\lambda}{d\lambda^2} +\frac{p-2s-1-a}{p-1} \frac{du_e^\lambda}{d\lambda}  \right) -   \frac{p+2s+a-1}{p-1}  \left(  \lambda \frac{du_e^\lambda}{d\lambda} -\frac{2s+a}{p-1} u_e^\lambda \right)
\\&=&
 \lambda^2 \frac{d^2u_e^\lambda}{d\lambda^2} - \frac{4s+2a}{p-1}   \lambda \frac{du_e^\lambda}{d\lambda} +\frac{(2s+a)(p+2s+a-1)}{(p-1)^2} u_e^\lambda
\end{eqnarray}
Note that using the definition of the operator $\Delta_b$ and $v$ we have 
 \begin{equation}
v_e^\lambda= \Delta_b u^\lambda_e = y^{-b} \div(y^b \nabla u^\lambda_e),
\end{equation}
and on $\mathbb{R}^{n+1}_+\cap \partial B_1$ we have 
 \begin{equation}
 \div(y^b \nabla u_e^\lambda )=(u_{rr} +(n+b)u_r )\theta_1^b +\div_{\mathcal S^n} (\theta_1^b \nabla_{S^n} u_e^\lambda), 
 \end{equation}
where $\theta_1=\frac {y}{r}$. From the above, (\ref{u1lambda}) and (\ref{2ru}) we obtain 
\begin{eqnarray*}
v_e^\lambda &=& \lambda^2 \frac{d^2u_e^\lambda}{d\lambda^2} +    \lambda \frac{du_e^\lambda}{d\lambda} (n+b-\frac{4s+2a}{p-1} )+ u_e^\lambda  (\frac{2s+a}{p-1})(\frac{p+2s+a-1}{p-1}-n-b) + \theta_1^{-b}\div_{\mathcal S^n} (\theta_1^b \nabla_{S^n} u_e^\lambda). 
\end{eqnarray*}
From this and (\ref{energyder}) we get  
 \begin{eqnarray}
\frac{d\bar E(u_e^\lambda,1)}{d\lambda}&=& \int_{  \mathbb{R}^{n+1}_{+}\cap \partial B_1} \theta_1^b \lambda  \left(    \lambda^2 \frac{d^2u_e^\lambda}{d\lambda^2} +\alpha \lambda \frac{du_e^\lambda}{d\lambda} + \beta u_e^\lambda     \right)   \frac{d^2u_e^\lambda}{d\lambda^2}
\\&&+  \int_{  \mathbb{R}^{n+1}_{+}\cap \partial B_1}   \theta_1^b 3\left(    \lambda^2 \frac{d^2u_e^\lambda}{d\lambda^2} +\alpha \lambda \frac{du_e^\lambda}{d\lambda} + \beta u_e^\lambda     \right)          \frac{du_e^\lambda}{d\lambda}
\\&&- \int_{  \mathbb{R}^{n+1}_{+}\cap \partial B_1}     \theta_1^b  \lambda  \frac{du_e^\lambda}{d\lambda}  \frac{d}{d\lambda}  \left(    \lambda^2 \frac{d^2u_e^\lambda}{d\lambda^2} +\alpha \lambda \frac{du_e^\lambda}{d\lambda} + \beta u_e^\lambda     \right)
\\&& +\int_{  \mathbb{R}^{n+1}_{+}\cap \partial B_1} \theta_1^b \lambda   \frac{d^2u_e^\lambda}{d\lambda^2} \theta_1^{-b}\div_{\mathcal S^n} (\theta_1^b \nabla_{S^n} u_e^\lambda)
 \\&& +\int_{  \mathbb{R}^{n+1}_{+}\cap \partial B_1}  3  \theta_1^b \ \frac{du_e^\lambda}{d\lambda}  \theta_1^{-b}\div_{\mathcal S^n} (\theta_1^b \nabla_{S^n} u_e^\lambda)  \\&& -  \int_{  \mathbb{R}^{n+1}_{+}\cap \partial B_1} \theta_1^b  \lambda \frac{d}{d\lambda}\left(  \theta_1^{-b}\div_{\mathcal S^n} (\theta_1^b \nabla_{S^n} u_e^\lambda)  \right) \frac{du_e^\lambda}{d\lambda}
\end{eqnarray}
where $\alpha:=n + b- \frac{4s+2a}{p-1}$ and $\beta:=\frac{2s+a}{p-1}\left(  \frac{p+2s+a-1}{p-1} -n-b \right)$. Simplifying the integrals we get
\begin{eqnarray}
\label{de}\frac{d\bar E(u_e^\lambda,1)}{d\lambda}&=&  \int_{  \mathbb{R}^{n+1}_{+}\cap \partial B_1}  \theta_1^b \left( 2 \lambda^3    \left(   \frac{d^2u_e^\lambda}{d\lambda^2}\right)^2 + 4 \lambda^2  \frac{d^2u_e^\lambda}{d\lambda^2}  \frac{du_e^\lambda}{d\lambda} +2(\alpha-\beta) \lambda \left(    \frac{du_e^\lambda}{d\lambda}\right)^2 \right)
 \\&& \nonumber + \int_{  \mathbb{R}^{n+1}_{+}\cap \partial B_1}  \theta_1^b \left( \frac{\beta}{2} \frac{d^2}{d\lambda^2} \left(   \lambda (u_e^\lambda)^2    \right) -\frac{1}{2} \frac{d}{d\lambda}  \left(   \lambda^3      \frac{d}{d\lambda} \left(   \frac{d  u_e^\lambda }{d\lambda} \right)^2     \right) +\frac{\beta}{2} \frac{d}{d\lambda}(u_e^\lambda)^2 \right)
\\&& \nonumber+\int_{  \mathbb{R}^{n+1}_{+}\cap \partial B_1}   \lambda \frac{d^2u_e^\lambda}{d\lambda^2} \div_{\mathcal S^n} (\theta_1^b \nabla_{S^n} u_e^\lambda)  +3  \div_{\mathcal S^n} (\theta_1^b \nabla_{S^n} u_e^\lambda)    \frac{du_e^\lambda}{d\lambda}   
\\&&  \nonumber -  \int_{  \mathbb{R}^{n+1}_{+}\cap \partial B_1}   \lambda \frac{d}{d\lambda}\left(  \div_{\mathcal S^n} (\theta_1^b \nabla_{S^n} u_e^\lambda)   \right) \frac{du_e^\lambda}{d\lambda} . 
\end{eqnarray}
Note that from the assumptions we have  $\alpha-\beta-1>0$, therefore the first term in the right-hand side of (\ref{de}) is positive that is
\begin{eqnarray}
&&2 \lambda^3    \left(   \frac{d^2u_e^\lambda}{d\lambda^2}\right)^2 + 4 \lambda^2  \frac{d^2u_e^\lambda}{d\lambda^2}  \frac{du_e^\lambda}{d\lambda} +2(\alpha-\beta) \lambda \left(    \frac{du_e^\lambda}{d\lambda}\right)^2
\\&=& 2 \lambda \left(  \lambda  \frac{d^2u_e^\lambda}{d\lambda^2}  +   \frac{du_e^\lambda}{d\lambda}  \right)^2 +2(\alpha-\beta-1) \lambda \left(    \frac{du_e^\lambda}{d\lambda}\right)^2 >0 . 
\end{eqnarray}
From this we obtain 
\begin{eqnarray*}
\label{de1}\frac{d\bar E(u_e^\lambda,1)}{d\lambda}& \ge &  \int_{  \mathbb{R}^{n+1}_{+}\cap \partial B_1}  \theta_1^b\left( \frac{\beta}{2} \frac{d^2}{d\lambda^2} \left(   \lambda (u_e^\lambda)^2    \right) -\frac{1}{2} \frac{d}{d\lambda}  \left(   \lambda^3      \frac{d}{d\lambda} \left(   \frac{d  u_e^\lambda }{d\lambda} \right)^2     \right) +\frac{\beta}{2} \frac{d}{d\lambda}(u_e^\lambda)^2 \right)
\\&& \nonumber+\int_{  \mathbb{R}^{n+1}_{+}\cap \partial B_1}   \lambda \frac{d^2u_e^\lambda}{d\lambda^2} \div_{\mathcal S^n} (\theta_1^b \nabla_{S^n} u_e^\lambda)  +3  \div_{\mathcal S^n} (\theta_1^b \nabla_{S^n} u_e^\lambda)    \frac{du_e^\lambda}{d\lambda}    -    \lambda \frac{d}{d\lambda}\left(  \div_{\mathcal S^n} (\theta_1^b \nabla_{S^n} u_e^\lambda)   \right) \frac{du_e^\lambda}{d\lambda} \\&=:& R_1+R_2.
\end{eqnarray*}
Note that the three terms appeared in $R_1$ are of the following form
\begin{eqnarray*}
 \int_{  \mathbb{R}^{n+1}_{+}\cap \partial B_1}
\theta_1^b \frac{d^2}{d\lambda^2} \left(   \lambda (u_e^\lambda)^2    \right) &=&  \frac{d^2}{d\lambda^2} \left(   \lambda^{ \frac{4s+2a}{p-1}+2(s-1)-n } \int_{  \mathbb{R}^{n+1}_{+}\cap \partial B_\lambda}
y^b  u_e^2    \right) \\
 \int_{  \mathbb{R}^{n+1}_{+}\cap \partial B_1}
\theta_1^b \frac{d}{d\lambda}  \left[  \lambda^3      \frac{d}{d\lambda} \left(   \frac{d  u_e^\lambda }{d\lambda} \right)^2     \right] &=&  \frac{d}{d\lambda}  \left[   \lambda^3      \frac{d}{d\lambda} \left(  \lambda^{ \frac{4s+2a}{p-1}+2s-3-n }    \int_{  \mathbb{R}^{n+1}_{+}\cap \partial B_\lambda} y^b \left[  \frac{2s+a}{p-1} \lambda^{-1} u_e + \frac{\partial u_e}{\partial r}      \right]^2     \right)   \right]
 \\
  \int_{  \mathbb{R}^{n+1}_{+}\cap \partial B_1}  y^b \frac{d}{d\lambda}(u_e^\lambda)^2 &=& \frac{d}{d\lambda} \left( \lambda^{  2s-3+ \frac{4s+2a}{p-1}-n }  \int_{  \mathbb{R}^{n+1}_{+}\cap \partial B_\lambda}      y^b u_e ^2\right) . 
\end{eqnarray*}
We now apply integration by parts to simplify the terms appeared in $R_2$.
\begin{eqnarray*}
 R_2 &=&  \int_{  \mathbb{R}^{n+1}_{+}\cap \partial B_1}   \lambda \frac{d^2u_e^\lambda}{d\lambda^2} \div_{\mathcal S^n} (\theta_1^b \nabla_{S^n} u_e^\lambda)  +3  \div_{\mathcal S^n} (\theta_1^b \nabla_{S^n} u_e^\lambda)    \frac{du_e^\lambda}{d\lambda}    -    \lambda \frac{d}{d\lambda}\left(  \div_{\mathcal S^n} (\theta_1^b \nabla_{S^n} u_e^\lambda)   \right) \frac{du_e^\lambda}{d\lambda} \\
 &=&  \int_{  \mathbb{R}^{n+1}_{+}\cap \partial B_1} - \theta_1^b  \lambda  \nabla_{\mathcal S^n} u_e^\lambda \cdot   \nabla_{\mathcal S^n}  \frac{d^2u_e^\lambda}{d\lambda^2} - 3 \theta_1^b   \nabla_{\mathcal S^n}  u_e^\lambda \cdot    \nabla_{\mathcal S^n}  \frac{du_e^\lambda}{d\lambda}  +  \theta_1^b\lambda \left|     \nabla_{\mathcal S^n}  \frac{du_e^\lambda}{d\lambda}   \right|^2\\
&=& - \frac{\lambda}{2}  \frac{d^2}{d\lambda^2} \left(  \int_{  \mathbb{R}^{n+1}_{+}\cap \partial B_1} \theta_1^b  |\nabla_\theta u_e^\lambda |^2   \right) -\frac{3}{2} \frac{d}{d\lambda} \left(  \int_{  \mathbb{R}^{n+1}_{+}\cap \partial B_1}  \theta_1^b |\nabla_\theta u_e^\lambda |^2   \right)+2\lambda \int_{  \mathbb{R}^{n+1}_{+}\cap \partial B_1}  \theta_1^b \left|    \nabla_\theta \frac{du_e^\lambda}{d\lambda}   \right|^2
\\&=& - \frac{1}{2}  \frac{d^2}{d\lambda^2} \left(   \lambda      \int_{  \mathbb{R}^{n+1}_{+}\cap \partial B_1} \theta_1^b  |\nabla_\theta u_e^\lambda |^2   \right) -\frac{1}{2} \frac{d}{d\lambda} \left(  \int_{  \mathbb{R}^{n+1}_{+}\cap \partial B_1} \theta_1^b  |\nabla_\theta u_e^\lambda |^2   \right)+2\lambda \int_{  \mathbb{R}^{n+1}_{+}\cap \partial B_1} \theta_1^b \left|    \nabla_\theta \frac{du_e^\lambda}{d\lambda}   \right|^2
\\&\ge& - \frac{1}{2}  \frac{d^2}{d\lambda^2} \left(   \lambda      \int_{  \mathbb{R}^{n+1}_{+}\cap \partial B_1}   \theta_1^b |\nabla_\theta u_e^\lambda |^2   \right) -\frac{1}{2} \frac{d}{d\lambda} \left(  \int_{  \mathbb{R}^{n+1}_{+}\cap \partial B_1}  \theta_1^b |\nabla_\theta u_e^\lambda |^2   \right) . 
 \end{eqnarray*}
 Note that the two terms that appear as lower bound for $R_3$ are of the form
  \begin{eqnarray}
 \frac{d^2}{d\lambda^2} \left(   \lambda      \int_{  \mathbb{R}^{n+1}_{+}\cap \partial B_1} \theta_1^b  |\nabla_\theta u_e^\lambda |^2   \right) &=&  \frac{d^2}{d\lambda^2} \left[  \lambda^{\frac{2s(p+1)+2a}{p-1}-n}   \int_{  \mathbb{R}^{n+1}_{+}\cap \partial B_\lambda} y^b  \left(  |\nabla  u|^2-\left|     \frac{\partial u}{\partial r}\right|^2  \right)
  \right]
  \\
   \frac{d}{d\lambda} \left(  \int_{  \mathbb{R}^{n+1}_{+}\cap \partial B_1}  \theta_1^b |\nabla_\theta u_e^\lambda |^2   \right) &=&  \frac{d}{d\lambda} \left[  \lambda^{\frac{2s(p+1)+2a}{p-1}-n-1}     \int_{  \mathbb{R}^{n+1}_{+}\cap \partial B_\lambda} y^b \left(  |\nabla  u|^2-\left|     \frac{\partial u}{\partial r}\right|^2  \right)      \right] . 
  \end{eqnarray}

   \hfill $ \Box$

 \begin{remark}
 It is straightforward to show that $n>\frac{2s(p+1)+2a}{p-1} $ implies $n>\frac{p+4s+2a-1}{p+2s+a-1}+ \frac{2s+a}{p-1}-b$.
 \end{remark}

 \section{Homogeneous solutions}

 \begin{thm}\label{homog}
 Suppose that $u=r^{-\frac{2s+a}{p-1}} \psi(\theta)$ is a stable solution of (\ref{main}). Then $\psi$ must be identically zero,  provided
 $p>\frac{n+2s+2a}{n-2s}$ and
 \begin{equation} \label{cond}
p \frac{\Gamma(\frac n2-\frac{s+\frac{a}{2}}{p-1}) \Gamma(s+\frac{s+\frac{a}{2}}{p-1})}{\Gamma(\frac{s+\frac{a}{2}}{p-1}) \Gamma(\frac{n-2s}{2} - \frac{s+\frac{a}{2}}{p-1})} > \frac{\Gamma(\frac{n+2s}{4})^2}{\Gamma(\frac{n-2s}{4})^2}. 
\end{equation}
 \end{thm}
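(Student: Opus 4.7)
The plan is to test the stability inequality (\ref{stability}) with a family of separable, logarithmically cut-off functions adapted to the scale-invariant structure of $u$, reduce the resulting inequality to a spectral problem on the unit sphere $S^{n-1}$, and then contradict hypothesis (\ref{cond}) unless $\psi\equiv 0$. Setting $\gamma := (2s+a)/(p-1)$, one immediately has $|x|^a|u|^{p-1}=r^{-2s}|\psi(\theta)|^{p-1}$, so the right-hand side of (\ref{stability}) is homogeneous of degree $-2s$, matching the scaling of the fractional Hardy inequality. For each $g\in C^{\infty}(S^{n-1})$ and a fixed $\eta_0\in C_c^{\infty}(\mathbb{R})$, I would insert into (\ref{stability}) the test function
\[
\phi_R(x) \;=\; r^{-(n-2s)/2}\,g(\theta)\,\eta_0\!\left(\tfrac{\log r}{\log R}\right),
\]
the radial exponent $(n-2s)/2$ being the Hardy extremal, which forces both sides of (\ref{stability}) to scale identically under dilations.

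Expanding $g=\sum_{k\ge 0}a_k Y_k$ in spherical harmonics and invoking the standard identity
\[
(-\Delta)^s\!\bigl(r^{-(n-2s)/2}Y_k\bigr) \;=\; \Lambda_k\, r^{-(n+2s)/2}Y_k,\qquad \Lambda_0=\Lambda_{n,s},
\]
a computation that separates radial from angular variables gives, modulo $O(1)$ boundary corrections and up to a common explicit dimensional constant (which I absorb),
\begin{align*}
\iint\frac{(\phi_R(x)-\phi_R(y))^2}{|x-y|^{n+2s}}\,dx\,dy &\;\sim\; (\log R)\|\eta_0\|_{L^2}^{2}\,\sum_k \Lambda_k a_k^2,\\
p\int|x|^a|u|^{p-1}\phi_R^2\,dx &\;\sim\; p(\log R)\|\eta_0\|_{L^2}^{2}\,\int_{S^{n-1}}|\psi|^{p-1}g^2\,d\theta.
\end{align*}
Dividing by $\log R$ and letting $R\to\infty$, stability reduces to the spherical spectral inequality
\[
\sum_k \Lambda_k a_k^2 \;\geq\; p\int_{S^{n-1}}|\psi|^{p-1}g^2\,d\theta\qquad \forall\, g\in C^{\infty}(S^{n-1}).
\]

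The ansatz $u=r^{-\gamma}\psi(\theta)$ substituted into (\ref{main}) produces a spherical equation $\widetilde L\,\psi=|\psi|^{p-1}\psi$, where $\widetilde L$ acts diagonally on spherical harmonics with eigenvalues $\widetilde\Lambda_k$ and $\widetilde\Lambda_0=A^{p-1}$; this last identity is precisely the formula relating $A$ to $\lambda(\cdot)$ recorded in the introduction. Testing the spherical inequality with $g=\psi$ (after regularization if necessary), and using $\int_{S^{n-1}}|\psi|^{p+1}\,d\theta=\sum_k \widetilde\Lambda_k a_k^2$ where now $a_k$ denote the Fourier coefficients of $\psi$ itself, one obtains
\[
\sum_{k\ge 0}\bigl(\Lambda_k - p\,\widetilde\Lambda_k\bigr)\,a_k^2 \;\geq\; 0.
\]
A direct Gamma-function manipulation, using $\beta := (s+a/2)/(p-1)$, verifies that condition (\ref{cond}) is precisely the statement $\Lambda_0<p\widetilde\Lambda_0$, so the $k=0$ term is strictly negative whenever $a_0\neq 0$.

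The remaining step, which I expect to be the main obstacle, is to upgrade this to the analogous strict inequality $\Lambda_k<p\widetilde\Lambda_k$ for every $k\geq 1$, i.e.\ to show that the four-Gamma-function ratio $\Lambda_k/\widetilde\Lambda_k$ is maximized at $k=0$. Granting this monotonicity in $k$, each term in the sum above is strictly negative unless the corresponding $a_k$ vanishes, forcing all $a_k=0$ and hence $\psi\equiv 0$. The monotonicity itself amounts to a sign analysis of a quantity of the form $\log\Gamma(A+k/2)+\log\Gamma(B+k/2)-\log\Gamma(C+k/2)-\log\Gamma(D+k/2)$, which should follow from the convexity and monotonicity properties of $\log\Gamma$ together with the parameter ordering dictated by the hypothesis $p>p_S(n,a)$, equivalently $\gamma<(n-2s)/2$.
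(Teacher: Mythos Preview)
Your overall framework---testing stability with $\phi_R=r^{-(n-2s)/2}g(\theta)\eta(\log r/\log R)$ and extracting the leading $\log R$ coefficient---matches the paper's strategy. However, the spherical-harmonic decomposition you introduce creates a gap that the paper's argument avoids entirely.

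The gap is exactly the step you flag as the ``main obstacle'': proving $\Lambda_k<p\widetilde\Lambda_k$ for every $k\ge 1$. Your proposed route via convexity of $\log\Gamma$ and the parameter ordering $\gamma<(n-2s)/2$ is vague; the sign of a combination $\log\Gamma(A+k/2)+\log\Gamma(B+k/2)-\log\Gamma(C+k/2)-\log\Gamma(D+k/2)$ is not determined by convexity alone without a careful relationship among $A,B,C,D$, and you give no such analysis. As it stands, the proof is incomplete.

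The paper sidesteps the eigenvalue comparison completely by never passing to spherical harmonics. It writes both the angular equation satisfied by $\psi$ and the angular stability inequality in the \emph{kernel} form
\[
A_{n,s,a}\!\int_{S^{n-1}}\!\psi^2+\iint K_{\frac{2s+a}{p-1}}(\langle\theta,\sigma\rangle)\bigl(\psi(\theta)-\psi(\sigma)\bigr)^2=\int_{S^{n-1}}\psi^{p+1},
\]
\[
\Lambda_{n,s}\!\int_{S^{n-1}}\!\psi^2+\iint K_{\frac{n-2s}{2}}(\langle\theta,\sigma\rangle)\bigl(\psi(\theta)-\psi(\sigma)\bigr)^2\ \ge\ p\int_{S^{n-1}}\psi^{p+1},
\]
where $K_\alpha(\langle\theta,\sigma\rangle)=\int_0^1\frac{t^{n-1-\alpha}+t^{2s-1+\alpha}}{(1+t^2-2t\langle\theta,\sigma\rangle)^{(n+2s)/2}}\,dt$. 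Subtracting $p$ times the first from the second leaves
\[
(\Lambda_{n,s}-pA_{n,s,a})\int_{S^{n-1}}\psi^2+\iint\bigl(K_{\frac{n-2s}{2}}-pK_{\frac{2s+a}{p-1}}\bigr)(\psi(\theta)-\psi(\sigma))^2\ \ge\ 0.
\]
Hypothesis (\ref{cond}) makes the first coefficient negative. For the second, a one-line computation shows $\partial_\alpha K_\alpha<0$ for $\alpha\in(0,(n-2s)/2)$; hence $K_{\frac{n-2s}{2}}<K_{\frac{2s+a}{p-1}}$ pointwise, and since $p>1$ the entire kernel $K_{\frac{n-2s}{2}}-pK_{\frac{2s+a}{p-1}}$ is strictly negative. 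Both terms are therefore $\le 0$, forcing $\psi\equiv 0$.

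Translated back into your language, the pointwise kernel inequality $K_{\frac{n-2s}{2}}<K_{\frac{2s+a}{p-1}}$ implies your eigenvalue inequality $\Lambda_k-p\widetilde\Lambda_k<0$ for all $k$ immediately, with no Gamma-function analysis needed: the nonlocal quadratic forms are pointwise ordered, so their restrictions to each harmonic subspace are ordered. This is both what fills your gap and what makes your spectral decomposition an unnecessary detour.
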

  \begin{proof}
 Since $u$ satisfies (\ref{main}), the function $\psi$ satisfies
 \begin{eqnarray*}
 |x|^{a} |x|^{-\frac{2ps+ap}{p-1}}  \psi^p(\theta)
&=& \int \frac{  |x|^{-\frac{2s+a}{p-1}} \psi(\theta) -|y|^{-\frac{2s+a}{p-1}} \psi(\sigma) }{ |x-y|^{n+2s}} dy \\&=&
  \int \frac{  |x|^{-\frac{2s+a}{p-1}} \psi(\theta) -r^{-\frac{2s+a}{p-1}} t^{-\frac{2s+a}{p-1}} \psi(\sigma) }{ (t^2+1- 2t <\theta,\sigma>)^{\frac{n+2s}{2}} |x|^{n+2s}} |x|^n t^{n-1} dt d\sigma \text{ \ where \ } |y|=rt
\\ &=&  |x|^{-\frac{2ps+a}{p-1}}  [  \int \frac{  \psi(\theta) - t^{-\frac{2s+a}{p-1}} \psi(\theta) }{ (t^2+1- 2t <\theta,\sigma>)^{\frac{n+2s}{2}} }  t^{n-1} dt d\sigma \\&&+ \int \frac{  t^{-\frac{2s+a}{p-1}} (\psi(\theta) -  \psi(\sigma) }{ (t^2+1- 2t <\theta,\sigma>)^{\frac{n+2s}{2}} }  t^{n-1} dt d\sigma] . 
\end{eqnarray*}
 We now drop $|x|^{-\frac{2ps+a}{p-1}}$ and get
 \begin{equation}\label{Ans}
 \psi(\theta) A_{n,s,a}(\theta) + \int_{\mathbb S^{n-1}} K_{\frac{2s+a}{p-1}} (<\theta,\sigma>) (\psi(\theta)-\psi(\sigma)) d \sigma= \psi^p(\theta) , 
 \end{equation}
 where 
  \begin{equation} 
  A_{n,s,a}:=\int_0^\infty \int_{\mathbb S^{n-1}} \frac{1-t^{  -\frac{2s+a}{p-1} }}{    (t^2+1- 2t <\theta,\sigma>)^{\frac{n+2s}{2}} } t^{n-1} d\sigma dt , 
   \end{equation}
 and 
  \begin{equation} 
  K_{\frac{2s+a}{p-1}}(<\theta,\sigma>):=\int_0^\infty \frac{t^{  n-1-\frac{2s}{p-1} }}{    (t^2+1- 2t <\theta,\sigma>)^{\frac{n+2s}{2}} }  dt   . 
  \end{equation}
 Note that
  \begin{eqnarray}
K_{\frac{2s+a}{p-1}}(<\theta,\sigma>)&=& \int_0^1 \frac{t^{  n-1-\frac{2s+a}{p-1} }}{    (t^2+1- 2t <\theta,\sigma>)^{\frac{n+2s}{2}} }  dt +\int_1^\infty \frac{t^{  n-1-\frac{2s+a}{p-1} }}{    (t^2+1- 2t <\theta,\sigma>)^{\frac{n+2s}{2}} }  dt
\\&=& \int_0^1 \frac{t^{  n-1-\frac{2s+a}{p-1}} + t^{  2s-1+\frac{2s+a}{p-1}}}{    (t^2+1- 2t <\theta,\sigma>)^{\frac{n+2s}{2}} }  dt . 
 \end{eqnarray}
We now set $K_{\alpha}(<\theta,\sigma>)= \int_0^1 \frac{t^{  n-1+\alpha} + t^{  2s-1+\alpha}}{    (t^2+1- 2t <\theta,\sigma>)^{\frac{n+2s}{2}} }  dt$. The most important property of the $K_\alpha$ is that $K_\alpha$ is decreasing in $\alpha$. This can be seen by the following elementary calculations
   \begin{eqnarray}
\partial_\alpha K_\alpha&=& \int_0^1 \frac{-t^{  n-1-\alpha} \ln t + t^{  2s-1+\alpha} \ln t}{    (t^2+1- 2t <\theta,\sigma>)^{\frac{n+2s}{2}} }  dt
\\&=& \int_0^1 \frac{\ln t (-t^{  n-1-\alpha} + t^{  2s-1+\alpha})}{    (t^2+1- 2t <\theta,\sigma>)^{\frac{n+2s}{2}} }  dt<0 . 
 \end{eqnarray}
For the last part we have used the fact that for $p>\frac{n+2s+2a}{n-2s}$ we have $2s-1+\alpha<n-1-\alpha$.   From (\ref{Ans}) we get the following
 \begin{equation}\label{Ans2}
\int_{\mathbb S^{n-1}} \psi^2(\theta) A_{n,s,a} + \int_{\mathbb S^{n-1}} K_{\frac{2s+a}{p-1}} (<\theta,\sigma>) (\psi(\theta)-\psi(\sigma))^2 d\theta d \sigma= \int_{\mathbb S^{n-1}} \psi^{p+1}(\theta) d\theta . 
 \end{equation}
 We set a standard cut-off function $\eta_\epsilon\in C_c^1(\mathbb R_+)$ at the origin and at infinity that is $\eta_\epsilon=1$ for $\epsilon<r<\epsilon^{-1}$ and $\eta_\epsilon=0$ for either $r<\epsilon/2$ or $r>2/\epsilon$. We test the stability (\ref{stability}) on the function $\phi(x)=r^{-\frac{n-2s}{2}} \psi(\theta) \eta_\epsilon(r)$.    Note that
    \begin{eqnarray}
\int_{\mathbb R^n} \frac{\phi(x)-\phi(y)}{|x-y|^{n+2s}}  dy = \int \int_{\mathbb S^{n-1}} \frac{r^{-\frac{n-2s}{2} } \psi(\theta)\eta_\epsilon(r) -|y|^{-\frac{n-2s}{2} } \psi(\sigma) \eta_\epsilon(|y|)}{    (r^2+|y|^2- 2r |y| <\theta,\sigma>)^{\frac{n+2s}{2}} }  d\sigma d(|y|)  . 
\end{eqnarray}
Now set $|y|=rt$ then
 \begin{eqnarray*}
&&\int_{\mathbb R^n} \frac{\phi(x)-\phi(y)}{|x-y|^{n+2s}}  dy 
\\&=& r^{-\frac{n}{2} -s} \int_0^\infty \int_{\mathbb S^{n-1}} \frac{\psi(\theta)\eta_\epsilon(r) - t^{-\frac{n-2s}{2} } \psi(\sigma) \eta_\epsilon(rt)}{  (t^2+1- 2t <\theta,\sigma>)^{\frac{n+2s}{2}  }} t^{n-1} dt d\sigma
\\&=& r^{-\frac{n}{2} -s} \int \int_{\mathbb S^{n-1}} \frac{\psi(\theta)\eta_\epsilon(r) - t^{-\frac{n-2s}{2} } \psi(\sigma) \eta_\epsilon(r) +t^{-\frac{n-2s}{2} } (   \eta(r) \psi(\theta)-\eta_\epsilon(rt) \psi(\sigma)  )  }{  (t^2+1- 2t <\theta,\sigma>)^{\frac{n+2s}{2}  }} t^{n-1} dt d\sigma
\\&=& r^{-\frac{n}{2} -s} \eta_\epsilon(r) \psi(\theta) \int_0^\infty \int_{\mathbb S^{n-1}} \frac{1-t^{  \frac{n-2s}{2} }}{    (t^2+1- 2t <\theta,\sigma>)^{\frac{n+2s}{2}} }   t^{n-1} dt d\sigma
\\&&+ r^{-\frac{n}{2} -s} \eta_\epsilon(r)  \int_0^\infty \int_{\mathbb S^{n-1}} \frac{t^{ n-1 -\frac{n-2s}{2} } (\psi(\theta)-\psi(\sigma))}{    (t^2+1- 2t <\theta,\sigma>)^{\frac{n+2s}{2}} }  dt d\sigma
\\&&+  r^{-\frac{n}{2} -s} \int_0^\infty \int_{\mathbb S^{n-1}} \frac{t^{n-1  -\frac{n-2s}{2} } (\eta_\epsilon(r)-\eta_\epsilon(rt))\psi(\sigma) }{    (t^2+1- 2t <\theta,\sigma>)^{\frac{n+2s}{2}} } dt d\sigma . 
\end{eqnarray*}
Define $\Lambda_{n,s} :=\int_0^\infty  \int_{\mathbb S^{n-1}}\frac{1-t^{  \frac{n-2s}{2} }}{    (t^2+1- 2t <\theta,\sigma>)^{\frac{n+2s}{2}} }  t^{n-1} d\sigma dt$. Therefore,
 \begin{eqnarray}
\int_{\mathbb R^n} \frac{\phi(x)-\phi(y)}{|x-y|^{n+2s}}  dy &=&r^{-\frac{n}{2} -s} \eta_\epsilon(r) \psi(\theta) \Lambda_{n,s}
\\&&+ r^{-\frac{n}{2} -s} \eta_\epsilon(r)  \int_{\mathbb S^{n-1}} K_{\frac{n-2s}{2}}(<\theta,\sigma>) (\psi(\theta)-\psi(\sigma)) d\sigma
\\&&+r^{-\frac{n}{2} -s} \int_0^\infty \int_{\mathbb S^{n-1}} \frac{t^{  -\frac{n-2s}{2} } (\eta_\epsilon(r)-\eta_\epsilon(rt))\psi(\sigma) }{    (t^2+1- 2t <\theta,\sigma>)^{\frac{n+2s}{2}} } dt d\sigma . 
\end{eqnarray}
 Applying the above, we compute the left-hand side of the stability inequality (\ref{stability}),
\begin{eqnarray}\label{stable1}
&&\nonumber\int_{\mathbb R^n}\int_{\mathbb R^n} \frac{ (\phi(x)-\phi(y))^2}{|x-y|^{n+2s}} dx dy \\&=& 2\int_{\mathbb R^n}\int_{\mathbb R^n} \frac{ (\phi(x)-\phi(y))\phi(x)}{|x-y|^{n+2s}} dx dy
\nonumber\\&=&
2\int_0^\infty r^{-1} \eta_\epsilon^2(r) dr \int_{\mathbb S^{n-1}} \psi^2  \Lambda_{n,s} d\theta
\nonumber\\&&+ 2\int_0^\infty r^{-1} \eta_\epsilon^2(r) dr \int_{\mathbb S^{n-1}} K_{\frac{n-2s}{2}}(<\theta,\sigma>) (\psi(\theta)-\psi(\sigma))^2 d\sigma d\theta
\nonumber\\&&+2 \int_0^\infty  \left[ \int_0^\infty   r^{-1} \eta_\epsilon(r) (\eta_\epsilon(r)-\eta_\epsilon(rt)) dr  \right] \int_{\mathbb S^{n-1}} \int_{\mathbb S^{n-1}} \frac{   t^{ n- 1-\frac{n-2s}{2} } \psi(\sigma)\psi(\theta)  }{ (t^2+1- 2t <\theta,\sigma>)^{\frac{n+2s}{2}}}  d\sigma d\theta dt . 
 \end{eqnarray}
We now compute the second term in the stability inequality (\ref{stability}) for the test function $\phi(x)=r^{-\frac{n-2s}{2}} \psi(\theta) \eta_\epsilon(r)$ and $u=r^{-\frac{2s}{p-1}} \psi(\theta)$. So,  
\begin{eqnarray}\label{stable2}
\nonumber p \int_0^\infty r^a |u|^{p-1} \phi^2 &=& p \int_0^\infty r^a r^{-(2s+a)} r^{-(n-2s)} \psi^{p+1} \eta_\epsilon^2(r) dr \\&=&
p \int_0^\infty r^{-1} \eta_\epsilon^2(r)  dr \int_{\mathbb S^{n-1}} \psi^{p+1}  (\theta) d\theta . 
  \end{eqnarray}
Due to the definition of $\eta_\epsilon$, we have  $\int_0^\infty r^{-1} \eta_\epsilon^2(r)  dr=2 \ln (2/\epsilon) +O(1)$. Note that this quantity  appears in both terms of the stability inequality that we computed  in (\ref{stable1}) and (\ref{stable2}). We now claim that
 \begin{equation}
f_\epsilon(t):=\int_0^\infty   r^{-1} \eta_\epsilon(r) (\eta_\epsilon(r)-\eta_\epsilon(rt)) dr=O(\ln t).
 \end{equation}
Note that $\eta_\epsilon(rt)=1$ for $\frac\epsilon t<r<\frac{1}{t\epsilon}$ and $\eta_\epsilon(rt)=0$ for either $r<\frac{\epsilon}{2t}$ or $r>\frac{2}{t\epsilon}$. Now consider various ranges of value of $t\in (0,\infty)$ to compare the support of $\eta_\epsilon(r)$ and  $\eta_\epsilon(rt)$. From the definition of $\eta_\epsilon$, we have
 \begin{equation}
f_\epsilon(t)=\int_{\frac{\epsilon}{2}}^{ \frac{2}{\epsilon}    }   r^{-1} \eta_\epsilon(r) (\eta_\epsilon(r)-\eta_\epsilon(rt)) dr . 
 \end{equation}
In what follows we consider a few cases to explain  the claim. For example assume that  $\epsilon< \frac{\epsilon}{t}< \frac{1}{\epsilon}$ that holds when $\epsilon^2 <t<1$, then
 \begin{equation}
f_\epsilon(t)\approx \int_{\frac{\epsilon}{2}}^{ \frac{\epsilon}{t}    }   r^{-1} dr +  \int_{\frac{1}{\epsilon}}^{ \frac{2}{\epsilon t}    }   r^{-1} dr \approx \ln t .
 \end{equation}
Now let $\frac{1}{\epsilon}< \frac{\epsilon}{t}< \frac{2}{\epsilon}$ that holds when $\frac{\epsilon^2}{2}< t< \epsilon^2$.  The fact that $t \approx \epsilon^2$ implies that 
 \begin{equation}
f_\epsilon(t)\approx \int_{\frac{\epsilon}{2}}^{ \frac{\epsilon}{t}    }   r^{-1} dr +  \int_{\frac{\epsilon}{t}}^{ \frac{2}{\epsilon }    }   r^{-1} dr \approx \ln t + \ln \epsilon \approx \ln t.
 \end{equation}
Other cases can be treated similarly. From this one can see that
\begin{eqnarray*}
&&\int_0^\infty  \left[ \int_0^\infty   r^{-1} \eta(r) (\eta(r)-\eta(rt)) dr  \right] \int_{\mathbb S^{n-1}}\int_{\mathbb S^{n-1}} \frac{   t^{ n- 1-\frac{n-2s}{2} }   }{ (t^2+1- 2t <\theta,\sigma>)^{\frac{n+2s}{2}}} \psi(\sigma)\psi(\theta) d\sigma d\theta dt
\\&\approx&  \int_{\mathbb S^{n-1}} \int_{\mathbb S^{n-1}} \int_0^\infty \frac{   t^{ n- 1-\frac{n-2s}{2} }  \ln t }{ (t^2+1- 2t <\theta,\sigma>)^{\frac{n+2s}{2}}} \psi(\sigma)\psi(\theta)dt  d\sigma d\theta
\\&=& O(1) . 
  \end{eqnarray*}
 Collecting higher order terms  of the stability inequality we get
 \begin{equation}
  \Lambda_{n,s} \int_{\mathbb S^{n-1}} \psi^2 + \int_{\mathbb S^{n-1}} K_{\frac{n-2s}{2}}(<\theta,\sigma>) (\psi(\theta)-\psi(\sigma))^2 d\sigma \ge p  \int_{\mathbb S^{n-1}} \psi^{p+1} . 
 \end{equation}
 From this and (\ref{Ans2}) we obtain
  \begin{eqnarray}
  ( \Lambda_{n,s} -p A_{n,s,a}) \int_{\mathbb S^{n-1}} \psi^2 + \int_{\mathbb S^{n-1}} (K_{\frac{n-2s}{2}} - pK_{\frac{2s+a}{p-1}}  )(<\theta,\sigma>) (\psi(\theta)-\psi(\sigma))^2 d\sigma \ge 0  . 
  \end{eqnarray}
Note that $K_\alpha$ is decreasing in $\alpha$. This implies $K_{\frac{n-2s}{2}} < K_{\frac{2s+a}{p-1}}$ for $p>\frac{n+2s+2a}{n-2s}$. So, $K_{\frac{n-2s}{2}} - pK_{\frac{2s+a}{p-1}} <0$. On the other hand the assumption of the theorem implies that $\Lambda_{n,s} -p A_{n,s,a}<0$. Therefore, $\psi=0$.

 \end{proof}

 \section{Energy estimates}
 In this section, we provide some estimates for solutions of (\ref{main}). These estimates are needed in the next section when we perform a blow-down analysis argument.  The methods and ideas provided in this section are strongly motivated by \cite{ddww,ddw}.

 \begin{lemma} \label{bound} Let $u$ be a stable solution to \eqref{main}. Let also $\eta\in C^\infty_{c}(\R^n)$ and for $x\in\R^n$, define
\begin{equation} \label{def rho}
\rho(x) = \int_{\rn}\frac{(\eta(x)-\eta(y))^2}{\vert x-y\vert^{n+2s}}\;dy  . 
\end{equation}
Then,
 \begin{equation}
 \int_{\mathbb R^n} |x|^a |u|^{p+1} \eta^2 dx +  \int_{\mathbb R^n }\int_{\mathbb R^n } \frac{| u(x)\eta(x)-u(y)\eta(y)|^2}{|x-y|^{n+2s}} dx dy \le C \int_{\mathbb R^n} u^2 \rho dx  . 
 \end{equation}
\end{lemma}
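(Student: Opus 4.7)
The plan is to combine the stability inequality (tested against $\phi = u\eta$) with the weak form of the equation (\ref{main}) (tested against $v = u\eta^2$). A single algebraic identity converts one into the other, up to a commutator-type term in which $\eta$ rather than $u$ is differenced, and this commutator is exactly what produces $\rho$.

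First I would multiply the equation $(-\Delta)^s u = |x|^a|u|^{p-1}u$ by $u\eta^2$ and integrate, obtaining
\[
\int_{\R^n}|x|^a |u|^{p+1}\eta^2\,dx \;=\; \int_{\R^n}\int_{\R^n}\frac{(u(x)-u(y))(u(x)\eta^2(x)-u(y)\eta^2(y))}{|x-y|^{n+2s}}\,dx\,dy,
\]
with the bilinear form of $(-\Delta)^s$ normalized consistently with the stability inequality (\ref{stability}). Next I would apply the pointwise identity
\[
(u(x)-u(y))\bigl(u(x)\eta^2(x)-u(y)\eta^2(y)\bigr) \;=\; \bigl(u(x)\eta(x)-u(y)\eta(y)\bigr)^2 - u(x)u(y)\bigl(\eta(x)-\eta(y)\bigr)^2,
\]
which is immediate by expansion. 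Writing $I_1 := \int_{\R^n}|x|^a|u|^{p+1}\eta^2\,dx$, $Q := \int_{\R^n}\int_{\R^n}(u\eta(x)-u\eta(y))^2/|x-y|^{n+2s}\,dx\,dy$, and $B := \int_{\R^n}\int_{\R^n}u(x)u(y)(\eta(x)-\eta(y))^2/|x-y|^{n+2s}\,dx\,dy$, the identity rewrites the energy equality as $I_1 = Q - B$.

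Now I would insert $\phi = u\eta$ into (\ref{stability}) to obtain $Q \ge p I_1$. Combined with $I_1 = Q - B$ this yields $(p-1)I_1 \le B$. Finally, the elementary bound $|u(x)u(y)| \le \frac12(u(x)^2+u(y)^2)$ together with the symmetry $x\leftrightarrow y$ produces $|B| \le \int_{\R^n} u^2\rho\,dx$. Hence $I_1 \le \tfrac{1}{p-1}\int u^2\rho$ and $Q = I_1 + B \le \tfrac{p}{p-1}\int u^2\rho$, and adding these two bounds gives the lemma with $C = (p+1)/(p-1)$.

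The main technical obstacle is not the computation but its justification: the test functions $u\eta$ and $u\eta^2$ do not lie in $C_c^\infty(\R^n)$, so a density argument (approximating $u$ by smooth truncations, using the assumed regularity $u \in C^{2\sigma}(\R^n)\cap L^1(\R^n,(1+|t|)^{n+2s}dt)$) is required to insert them legitimately into (\ref{stability}) and into the weak form of the equation. One must also match the bilinear-form normalization used in the weak equation with the one in (\ref{stability}); this is precisely what produces the sharp relation $(p-1)I_1 \le B$ rather than $(p-2)I_1 \le B$, and it is what makes the estimate work for all $p>1$ instead of merely $p>2$.
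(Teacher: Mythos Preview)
Your argument is correct and is precisely the standard proof of this lemma; the paper itself omits the proof and simply cites Lemma~2.1 of \cite{ddw}, whose argument is exactly the one you outline (test stability with $\phi=u\eta$, test the equation with $u\eta^2$, and use the identity $(u(x)-u(y))(u(x)\eta^2(x)-u(y)\eta^2(y))=(u\eta(x)-u\eta(y))^2-u(x)u(y)(\eta(x)-\eta(y))^2$). Your remarks on the density/normalization issues are also appropriate.
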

\begin{proof}
 Proof is quite similar to Lemma 2.1 in \cite{ddw} and we omit it here.
  \end{proof}

 \begin{lemma}\label{rho}
 Let $m>n/2$ and $x\in\mathbb R^n$. Set
  \begin{equation}\label{eta}
 \rho(x)=\int_{\mathbb R^n} \frac{(\eta(x)-\eta(y))^2}{|x-y|^{n+2s}} dy \ \ \text{where} \ \ \eta(x)=(1+|x|^2)^{-m/2} . 
 \end{equation}
Then there is a constant $C=C(n,s,m)>0$ such that
\begin{equation}
 C^{-1} (1+|x|^2)^{-n/2-s}\le \rho(x)\le C (1+|x|^2)^{-n/2-s} . 
 \end{equation}
  \end{lemma}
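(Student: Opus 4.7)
The plan is to prove the two-sided bound by splitting $\mathbb R^n$ into regions based on the relative positions of $x$, $y$, and the origin, treating $|x|\le R_0$ and $|x|\ge R_0$ separately for some fixed large $R_0$. The target profile $(1+|x|^2)^{-n/2-s}$ is bounded between positive constants on $\{|x|\le R_0\}$ and behaves like $|x|^{-n-2s}$ for $|x|$ large, so in the bounded regime it is enough to show that $\rho$ is continuous and strictly positive; this follows from $\eta\in C^\infty$ being non-constant, together with a dominated-convergence argument (the integrand is uniformly dominated on compact sets by an integrable majorant thanks to the Taylor bound near $y=x$ and the $L^2$ decay of $\eta$ at infinity). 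The entire work is therefore in the regime $|x|$ large.

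For $|x|\ge R_0$, I split $\mathbb R^n=A_1\cup A_2\cup A_3$ with $A_1=B(x,|x|/2)$ (near the pole), $A_2=B(0,|x|/2)$ (near the origin, where $\eta$ is of order one while $x$ is far), and $A_3$ the remainder (where $|y|$ and $|x-y|$ are both $\gtrsim|x|$). On $A_1$ the mean value inequality gives $|\eta(x)-\eta(y)|\lesssim |x|^{-m-1}|x-y|$, and integrating $|x-y|^{2-n-2s}$ produces a contribution of order $|x|^{-2m-2s}$. On $A_2$ the denominator is $\gtrsim |x|^{n+2s}$, and expanding the numerator as $(\eta(x)-\eta(y))^2\le 2\eta(x)^2+2\eta(y)^2$ yields $|x|^{-2m-2s}$ from the first term and $|x|^{-n-2s}\int_{\mathbb R^n}\eta^2$ from the second, the latter being finite precisely because $m>n/2$. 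On $A_3$ the crude bound $\eta(y)\lesssim |y|^{-m}$ together with $|x-y|\gtrsim|x|$ again yields $|x|^{-2m-2s}$. Adding the three contributions and using $|x|^{-2m-2s}\le |x|^{-n-2s}$ for $|x|\ge 1$ (by $2m>n$) gives the upper bound.

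For the matching lower bound at $|x|$ large I would keep only the contribution from $B(0,1)\subset A_2$: on this set $\eta(y)\ge \eta(0)/2=1/2$, while $\eta(x)\le C|x|^{-m}<1/4$ for $R_0$ chosen large enough, so $(\eta(x)-\eta(y))^2\ge 1/16$; combined with $|x-y|\le 2|x|$ this gives $\rho(x)\ge c|x|^{-n-2s}$. The main obstacle is really bookkeeping in the $A_2$ estimate: one has to verify that among the three pieces only $A_2$ produces the sharp rate $|x|^{-n-2s}$, with the other two contributing the strictly smaller power $|x|^{-2m-2s}$, and this is precisely where the hypothesis $m>n/2$ enters, both qualitatively (to ensure $\eta\in L^2(\mathbb R^n)$) and quantitatively (to guarantee $2m+2s>n+2s$ so that the error terms are genuinely subleading).
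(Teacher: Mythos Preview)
Your argument is sound and complete. The paper itself does not prove this lemma; it simply writes ``Proof is quite similar to Lemma~2.2 in \cite{ddw} and we omit it here,'' so your proposal actually supplies what the paper leaves out. The decomposition into a near-diagonal annulus $A_1$, a region $A_2$ near the origin, and a far region $A_3$ is exactly the standard way this estimate is obtained (and is essentially what is done in \cite{ddw}), and you have correctly identified that the dominant contribution $|x|^{-n-2s}$ comes from $A_2$ via $\int_{\mathbb R^n}\eta^2<\infty$, while $A_1$ and $A_3$ give the subleading $|x|^{-2m-2s}$; the hypothesis $m>n/2$ is used precisely where you say.

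Two minor remarks. First, on $B(0,1)$ you claim $\eta(y)\ge \eta(0)/2=1/2$; in fact $\eta(y)\ge 2^{-m/2}$ there, which need not exceed $1/2$, but any positive lower bound suffices and the rest of the lower-bound argument is unaffected. Second, your $A_1$ step integrates $|x-y|^{2-n-2s}$ over $B(0,|x|/2)$, which is finite only for $0<s<1$; this is indeed the regime in which the Gagliardo-type quantity $\rho$ is finite for smooth $\eta$ and is the setting of the cited lemma in \cite{ddw}, so the restriction is implicit in the statement, but it is worth being aware that the Taylor/mean-value step is where it enters.
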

 \begin{proof}
 Proof is quite similar to Lemma 2.2 in \cite{ddw} and we omit it here.
  \end{proof}

\begin{cor}\label{rhoR}
Suppose that $m>n/2$, $\eta$ given by (\ref{eta}) and $R>1$. Define
\begin{equation}\label{etaR}
 \rho_R(x)=\int_{\mathbb R^n} \frac{(\eta_R(x)-\eta_R(y))^2}{|x-y|^{n+2s}} dy \ \
 \text{where} \ \ \eta_R(x)=\eta(x/R) \phi(x) , 
  \end{equation}
 where $\phi\in C^{\infty}(\mathbb R^n)$ is a cut-off function such that $0 \le \phi\le 1$.   Then there exists a constant $C>0$ such that
 \begin{equation}
\rho_R(x)\le C \eta\left(\frac{x}{R}\right)^2 |x|^{-n-2s}+ R^{-2s} \rho\left(\frac{x}{R}\right).
 \end{equation}
\end{cor}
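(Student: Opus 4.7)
The plan is to start from the product form $\eta_R = \eta(\cdot/R)\,\phi$ and exploit the Leibniz-type splitting
\begin{equation*}
\eta_R(x) - \eta_R(y) \;=\; \eta(x/R)\bigl[\phi(x)-\phi(y)\bigr] \;+\; \phi(y)\bigl[\eta(x/R)-\eta(y/R)\bigr].
\end{equation*}
Applying the elementary inequality $(a+b)^2\le 2a^2+2b^2$ to the integrand defining $\rho_R(x)$ separates it into two pieces whose shapes already match the two terms on the right-hand side of the claimed bound. This is the standard way to localize the fractional Sobolev seminorm of a product and is what motivates the precise form of the estimate.

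For the second piece, I would use $0\le \phi\le 1$ to drop the factor $\phi(y)^2$ and then perform the change of variables $y=Rz$ in
\begin{equation*}
\int_{\R^n} \frac{\bigl(\eta(x/R)-\eta(y/R)\bigr)^2}{|x-y|^{n+2s}}\,dy.
\end{equation*}
The Jacobian contributes $R^n$ and $|x-y|^{n+2s}=R^{n+2s}|x/R-z|^{n+2s}$, so the two powers of $R$ combine into the overall factor $R^{-2s}$; the remaining integral is exactly $\rho(x/R)$ by definition \eqref{eta}. This yields the term $R^{-2s}\rho(x/R)$.

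For the first piece, the constant factor $\eta(x/R)^2$ comes out front and I need to bound
\begin{equation*}
\int_{\R^n}\frac{(\phi(x)-\phi(y))^2}{|x-y|^{n+2s}}\,dy \;\le\; C|x|^{-n-2s}.
\end{equation*}
Here I would use that $\phi$ is a fixed compactly supported (or at least suitably tame) cutoff: for $|x|$ large $\phi(x)$ vanishes and the integrand is supported in a fixed bounded set on which $|x-y|\asymp |x|$, giving the stated pointwise decay, while for $|x|$ of order one the quantity is simply bounded by a constant which can then be absorbed. This step parallels the computation of Lemma \ref{rho} closely, so I would invoke the same type of splitting into $|x-y|\le 1$, $1\le |x-y|\le |x|/2$, $|x-y|\ge |x|/2$ used there.

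The only real bookkeeping issue — and the step most likely to require care — is tracking the two different scales at play: the scale $R$ of $\eta(\cdot/R)$ versus the fixed $O(1)$ scale of $\phi$. The identity above already disentangles them, and after the change of variables in one term and the decay estimate on the cutoff in the other, the two resulting contributions assemble into exactly the claimed bound $C\eta(x/R)^2|x|^{-n-2s}+R^{-2s}\rho(x/R)$.
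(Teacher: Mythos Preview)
The paper states this result as a corollary without giving any proof, so there is no explicit argument to compare against; your Leibniz splitting $\eta_R(x)-\eta_R(y)=\eta(x/R)(\phi(x)-\phi(y))+\phi(y)(\eta(x/R)-\eta(y/R))$, followed by the rescaling $y\mapsto Rz$ on the second piece and a direct decay estimate for the fixed cutoff on the first, is exactly the natural derivation and is correct. One minor bookkeeping point: the inequality $(a+b)^2\le 2a^2+2b^2$ also places a factor $2$ in front of the $R^{-2s}\rho(x/R)$ term, so the honest conclusion is $\rho_R(x)\le C\bigl(\eta(x/R)^2|x|^{-n-2s}+R^{-2s}\rho(x/R)\bigr)$; this is harmless, since that is precisely the form used in the proof of Lemma~\ref{finalu2}.
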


\begin{lemma}\label{finalu2}
Suppose that $u$ is a stable solution of (\ref{main}). Consider $\rho_R$ that is defined in Corollary \ref{rhoR} for $n/2<m<n/2+s(p+1)/2$. Then there exists a constant $C>0$ such that
 \begin{equation}
 \int_{\mathbb R^n} u^2 \rho_R \le C  R^{n- \frac{2s(p+1)+2a}{p-1}}  ,
\end{equation}
for any $R>1$. 
\end{lemma}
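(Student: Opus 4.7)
The plan is to combine the stability-based bound of Lemma \ref{bound} with a Hölder inequality that lets the $L^{p+1}$ mass absorb itself, then to reduce everything to a purely geometric computation using the pointwise bound on $\rho_R$ from Corollary \ref{rhoR} and Lemma \ref{rho}.

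First, I would apply Lemma \ref{bound} with test function $\eta_R$ (in place of $\eta$) to obtain
\begin{equation*}
\int_{\mathbb R^n} |x|^a |u|^{p+1} \eta_R^2\, dx \;\le\; C \int_{\mathbb R^n} u^2 \rho_R\, dx.
\end{equation*}
Next, I would split the integrand of $\int u^2 \rho_R$ as
$u^2\rho_R = \bigl[u^2 (|x|^a\eta_R^2)^{2/(p+1)}\bigr]\cdot\bigl[\rho_R (|x|^a\eta_R^2)^{-2/(p+1)}\bigr]$
and apply Hölder with exponents $\tfrac{p+1}{2}$ and $\tfrac{p+1}{p-1}$. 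Writing
\begin{equation*}
X := \int_{\mathbb R^n} |x|^a |u|^{p+1} \eta_R^2\, dx, \qquad A_R := \int_{\mathbb R^n} \rho_R^{\frac{p+1}{p-1}}\bigl(|x|^a \eta_R^2\bigr)^{-\frac{2}{p-1}} dx,
\end{equation*}
this gives $\int u^2\rho_R \le X^{2/(p+1)} A_R^{(p-1)/(p+1)}$. Combined with $X \le C\int u^2\rho_R$, one deduces $X\le C A_R$ and hence $\int u^2 \rho_R \le C A_R$. So the entire statement reduces to proving
\begin{equation*}
A_R \;\le\; C\, R^{\,n-\frac{2s(p+1)+2a}{p-1}}.
\end{equation*}

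To estimate $A_R$, I would feed in the pointwise bound from Corollary \ref{rhoR}, namely $\rho_R(x)\le C\eta(x/R)^2|x|^{-n-2s}+CR^{-2s}\rho(x/R)$, together with the two-sided control $\rho(x/R)\le C(1+|x/R|^2)^{-n/2-s}$ from Lemma \ref{rho} and $\eta(x/R)^2=(1+|x/R|^2)^{-m}$. Splitting $A_R$ into the two corresponding pieces $A_R^{(1)}$ and $A_R^{(2)}$, and changing variables $x=Ry$ in each (using that $\phi\equiv 1$ on the relevant annulus so that $\eta_R^{-4/(p-1)}=(1+|y|^2)^{2m/(p-1)}$), a direct calculation gives
\begin{equation*}
A_R^{(i)} \;\le\; C\, R^{\,n-\frac{2s(p+1)+2a}{p-1}} \int_{\mathbb R^n} |y|^{-\frac{2a}{p-1}} \bigl(1+|y|^2\bigr)^{-\mu_i}\,|y|^{-\nu_i}\,dy,
\end{equation*}
where the weights $\mu_i,\nu_i$ come out to numbers depending on $n,s,p,m$. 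The conclusion will follow once these two integrals are finite.

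The main technical point is precisely this convergence. At infinity one of the integrals decays like $|y|^{-2m-\frac{(n+2s)(p+1)+2a}{p-1}+n-1}$ in the radial variable; this is integrable thanks to $m>n/2$. At infinity the second integral decays like $|y|^{-(n+2s)\frac{p+1}{p-1}+\frac{2m(p+1)}{p-1}-\frac{2a}{p-1}+n-1}$, and integrability there is exactly the upper bound $m<n/2+s(p+1)/2$. Behaviour near $y=0$ is harmless because of the cut-off $\phi$ built into $\eta_R$. Thus the two-sided range $n/2<m<n/2+s(p+1)/2$ is precisely what is needed, and the bound on $A_R$ closes the argument. The hard part is therefore neither the absorption nor the scaling, but the careful accounting of exponents that identifies the admissible window for $m$ with the one stated in the hypothesis.
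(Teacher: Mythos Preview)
Your approach is essentially identical to the paper's: apply H\"older with exponents $\tfrac{p+1}{2}$ and $\tfrac{p+1}{p-1}$ against the weight $|x|^a\eta_R^2$, absorb the $L^{p+1}$ mass via Lemma~\ref{bound}, and then reduce to estimating the purely geometric integral $A_R=\int_{\mathbb R^n} |x|^{-2a/(p-1)}\rho_R^{(p+1)/(p-1)}\eta_R^{-4/(p-1)}\,dx$ using the pointwise bounds on $\rho_R$ from Corollary~\ref{rhoR} and Lemma~\ref{rho}. The paper's proof is terser and does not spell out the exponent bookkeeping that pins down the admissible window for $m$, but the structure and the key inequalities are the same.
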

 \begin{proof}
Note that
\begin{eqnarray*}
\int_{\mathbb R^n} u^2 \rho_R dx \le \left( \int_{\mathbb R^n} |x|^{a} | |u|^{p+1} \eta_R^2 dx  \right)^{\frac{2}{p+1}}  \left( \int_{\mathbb R^n} |x|^{-\frac{2a}{p-1}}  \rho_R^{\frac{p+1}{p-1}} \eta_R^{-\frac{4}{p-1}} dx  \right)^{\frac{p-1}{p+1}} . 
\end{eqnarray*}
From Lemma \ref{bound} we get
\begin{eqnarray*}
\int_{\mathbb R^n} u^2 \rho_R dx \le  \int_{\mathbb R^n} |x|^{-\frac{2a}{p-1}}  \rho_R^{\frac{p+1}{p-1}} \eta_R^{-\frac{4}{p-1}} dx . 
\end{eqnarray*}
Now applying Corollary \ref{rhoR} for two different cases $|x|>R$ and $|x|<R$ one can get $\rho_R(x)\le C (|x|^{-n-2s}+R^{-2s})$ and 
 \begin{equation}
 \rho_R(x)\le C R^{-2s} \left( 1+\frac{|x|^2}{R^2} \right)^{-\frac{n}{2}-s}.
  \end{equation}
 This completes the proof.
  \end{proof}
 We now present  some more elliptic decay estimates on stable solutions. Since  proofs of these estimates are similar to the ones given   in \cite{ddw}, for the case of $0<s<1$, and  given in \cite{fw}, for the case of $1<s<2$, we omit them here.
\begin{lemma}\label{finalue2}
Suppose that $p\neq \frac{n+2s+2a}{n-2s}$. Let $u$ be a stable solution of (\ref{main}) and $u_e$ satisfies (\ref{maine}). Then there exists a constant $C>0$ independent from $R$ such that for  $0<s<1$ we have 
 \begin{equation}
 \int_{B_R} y^{1-2s} u_e^2 \le C R^{n+2- \frac{2s(p+1)+2a}{p-1}} , 
  \end{equation}
 and for $1<s<2$
we have 
 \begin{equation}
 \int_{B_R} y^{3-2s} u_e^2 \le C R^{n+4- \frac{2s(p+1)+2a}{p-1}}.  \end{equation}
\end{lemma}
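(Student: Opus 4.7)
The plan is to lift the boundary trace bound $\int u^2\rho_R\le CR^{n-\frac{2s(p+1)+2a}{p-1}}$ from Lemma \ref{finalu2} up to a weighted half-space bound on the extension $u_e$, via the Poisson-type representation of $u_e$ in terms of its trace $u$.

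For $0<s<1$, Theorem \ref{extension} gives $u_e(X)=\int_{\R^n}P(X,t)u(t)\,dt$ with $\int P(X,t)\,dt=1$, so Cauchy--Schwarz yields $u_e(X)^2\le\int P(X,t)u(t)^2\,dt$. Multiplying by $y^{1-2s}$, integrating over $B_R\cap\r$, and applying Fubini,
\begin{equation*}
\int_{B_R\cap\r}y^{1-2s}u_e^2\,dX\le\int_{\R^n}K_R(t)u(t)^2\,dt,\qquad K_R(t):=p_{n,s}\int_{B_R\cap\r}\frac{y\,dX}{|X-t|^{n+2s}}.
\end{equation*}
A direct scaling calculation gives $K_R(t)\le CR^2\rho_R(t)$: when $|t|\lesssim R$ one obtains $K_R(t)\sim R^{2-2s}$, matching $R^2\rho_R(t)$ via Lemma \ref{rho}; when $|t|\gg R$ both sides are comparable to $R^{n+2}|t|^{-n-2s}$. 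Inserting this pointwise kernel bound and then invoking Lemma \ref{finalu2} gives the claimed estimate.

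For $1<s<2$, the same strategy applies with Yang's biharmonic extension (see \eqref{maine}) replacing the Caffarelli--Silvestre extension. Since $u_e$ now solves $\Delta_b^2u_e=0$ with $b=3-2s$ and Neumann-type condition $\lim_{y\to 0}y^b\partial_y u_e=0$, the corresponding Poisson-type representation produces a kernel carrying an extra factor of $y^2$ relative to the first order case. Repeating the Cauchy--Schwarz/Fubini argument, the analogous kernel integral is bounded by $CR^4\rho_R(t)$, yielding $\int_{B_R}y^{3-2s}u_e^2\le CR^{n+4-\frac{2s(p+1)+2a}{p-1}}$.

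The main analytic obstacle is the pointwise kernel comparison $K_R(t)\le CR^2\rho_R(t)$ (respectively $\le CR^4\rho_R(t)$ in the higher order case), which requires separating the regimes $|t|\le 2R$ and $|t|>2R$ and matching the explicit two-sided bounds on $\rho_R$ provided by Corollary \ref{rhoR} and Lemma \ref{rho}. The assumption $p\ne p_S(n,a)$ is used through Lemma \ref{finalu2} to ensure the scaling exponent $n-\frac{2s(p+1)+2a}{p-1}$ behaves correctly. These steps are elementary but tedious, and parallel the corresponding arguments in \cite{ddw} for $0<s<1$ and \cite{fw} for $1<s<2$, which is precisely why the authors omit the proof.
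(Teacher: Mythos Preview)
Your approach is correct and is precisely the argument used in the references \cite{ddw} (for $0<s<1$) and \cite{fw} (for $1<s<2$) to which the paper defers: Jensen's inequality applied to the Poisson representation of $u_e$, Fubini, and a pointwise comparison of the resulting kernel $K_R$ with a suitable scaled $\rho$. One small correction: the inequality $K_R(t)\le CR^2\rho_R(t)$ requires a \emph{lower} bound on $\rho_R$, which you obtain from the two-sided estimate of Lemma~\ref{rho} by taking $\phi\equiv 1$ (so that $\rho_R(x)=R^{-2s}\rho(x/R)$), not from Corollary~\ref{rhoR}, which provides only an upper bound; also, Lemma~\ref{finalu2} carries no hypothesis on $p$, so the role you attribute to $p\ne p_S(n,a)$ is not quite accurate.
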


\begin{lemma}\label{lowest} Let $u$ be a stable solution of (\ref{main}) and $u_e$ satisfies (\ref{maine}). Then there exists a positive constant $C$ independent from $R$ such that when   $0<s<1$; 
 \begin{equation}
\int_{B_R\cap\partial\mathbb R_+^{n+1}} |x|^a |u_e|^{p+1} dx + \int_{B_R\cap\mathbb R_+^{n+1}}  y^{1-2s} |\nabla u_e|^2 dxdy  \le C R^{n- \frac{2s(p+1)+2a}{p-1}}
  \end{equation} and when  $1<s<2$; 
\begin{equation}
\int_{B_R\cap\partial\mathbb R_+^{n+1}} |x|^a |u_e|^{p+1} dx + \int_{B_R\cap\mathbb R_+^{n+1}}  y^{3-2s} |\Delta_b u_e|^2 dxdy  \le C R^{n- \frac{2s(p+1)+2a}{p-1}} .
  \end{equation}
  \end{lemma}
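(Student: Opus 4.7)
The plan is to split the estimate into two pieces — the weighted $L^{p+1}$ bound on $\partial\mathbb{R}^{n+1}_+$ and the weighted Dirichlet-type energy in $\mathbb{R}^{n+1}_+$ — and to establish each by combining the stability-based bounds of Lemmas \ref{bound}--\ref{finalue2} with one or two integrations by parts against the extension equation.

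For the boundary term, apply Lemma \ref{bound} with the cutoff $\eta = \eta_R$ from Corollary \ref{rhoR} (choosing $n/2<m<n/2+s(p+1)/2$). Since $\eta_R$ is bounded below by a positive constant on $B_R$, one obtains
\begin{equation*}
\int_{B_R\cap\partial\mathbb{R}^{n+1}_+} |x|^a |u_e|^{p+1}\, dx \le C \int_{\mathbb{R}^n} |x|^a |u|^{p+1}\eta_R^2\, dx \le C\int_{\mathbb{R}^n} u^2 \rho_R\, dx,
\end{equation*}
and Lemma \ref{finalu2} immediately gives the right-hand side $\le CR^{n-\frac{2s(p+1)+2a}{p-1}}$.

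For the energy term when $0<s<1$, let $\chi_R \in C^\infty_c(\mathbb{R}^{n+1})$ be a standard radial cutoff with $\chi_R\equiv1$ on $B_R$, supported in $B_{2R}$, and $|\nabla\chi_R| \le C/R$. Testing the extension equation $-\nabla\cdot(y^{1-2s}\nabla u_e) = 0$ against $u_e \chi_R^2$ and using the Neumann-type boundary condition at $y=0$ (which produces exactly $\kappa_s\int|x|^a|u|^{p+1}\chi_R^2$) yields
\begin{equation*}
\int_{\mathbb{R}^{n+1}_+} y^{1-2s}|\nabla u_e|^2 \chi_R^2 = \kappa_s\int_{\mathbb{R}^n} |x|^a|u|^{p+1}\chi_R^2 - 2\int_{\mathbb{R}^{n+1}_+} y^{1-2s} u_e \chi_R \nabla u_e\cdot\nabla\chi_R.
\end{equation*}
A Cauchy--Schwarz--Young absorption on the last term leaves a residual of the form $CR^{-2}\int_{B_{2R}} y^{1-2s} u_e^2$, which is of the required order by Lemma \ref{finalue2}; the first term on the right is already of the required order by Step 1.

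For the energy term when $1<s<2$, use the weak form of the fourth-order extension problem \eqref{maine}: for admissible $\phi$ with $\lim_{y\to0} y^b\partial_y\phi = 0$,
\begin{equation*}
\int_{\mathbb{R}^{n+1}_+} y^b \Delta_b u_e\cdot\Delta_b\phi\, dxdy = C_{n,s}\int_{\mathbb{R}^n} |x|^a|u|^{p-1}u\cdot\phi|_{y=0}\, dx.
\end{equation*}
Plug in $\phi = u_e\chi_R^2$ with $\chi_R$ taken radial in $x$ (so the compatibility condition holds thanks to the first boundary condition in \eqref{maine}), expand $\Delta_b(u_e\chi_R^2) = \chi_R^2 \Delta_b u_e + u_e\Delta_b\chi_R^2 + 2\nabla u_e\cdot\nabla\chi_R^2$, and apply Young's inequality to absorb $\tfrac12\int y^b|\Delta_b u_e|^2\chi_R^2$ on the right. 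The leftover contains $\int y^b|\nabla u_e|^2|\nabla\chi_R|^2$ on an annulus, which is bounded via an auxiliary identity obtained by testing the equation for $u_e$ against $u_e\tilde\chi^2$:
\begin{equation*}
\int y^b|\nabla u_e|^2\tilde\chi^2 = -\int y^b u_e\Delta_b u_e\tilde\chi^2 - \int y^b u_e \nabla u_e\cdot\nabla\tilde\chi^2,
\end{equation*}
performed on a slightly enlarged cutoff $\tilde\chi_R$; absorbing once more and invoking the weighted $L^2$ bound of Lemma \ref{finalue2} closes the estimate.

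The main obstacle is the fourth-order case: controlling $\int y^b|\Delta_b u_e|^2\chi_R^2$ requires simultaneous control of $\int y^b|\nabla u_e|^2\chi_R^2$, which forces a two-step cutoff bootstrap, and the bookkeeping of the cross terms (each losing at most a factor $R^{-2}$ when derivatives hit the cutoff) must be done carefully so the resulting power of $R$ lines up precisely with the $L^2$ decay from Lemma \ref{finalue2}, preserving the sharp exponent $n-\frac{2s(p+1)+2a}{p-1}$.
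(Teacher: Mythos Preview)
Your proposal is essentially correct and matches the standard route the paper implicitly defers to: the paper omits this proof entirely, pointing to \cite{ddw} for $0<s<1$ and to \cite{fw} for $1<s<2$, and your outline is exactly the argument carried out there (stability estimate for the boundary $L^{p+1}$ term, then testing the extension equation against $u_e$ times a cutoff and invoking Lemma~\ref{finalue2}).

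One refinement worth flagging in the $1<s<2$ case. When you expand $\Delta_b(u_e\chi_R^2)$ and try to absorb, the piece coming from $u_e\,|\nabla\chi_R|^2$ carries no factor of $\chi_R$ and therefore cannot be absorbed into $\int y^b|\Delta_b u_e|^2\chi_R^2$; likewise, your auxiliary identity produces $\int y^b|\Delta_b u_e|^2$ on a \emph{larger} ball, so a naive single absorption is circular. The way this is closed in \cite{fw,ddww} is either (i) to take a higher power of the cutoff, say $\phi=u_e\chi_R^{4}$, so that every commutator term retains a factor $\chi_R^{2}$ and becomes absorbable, or (ii) to use the interpolation inequality
\[
R^{-2}\!\int_{B_{2R}}\! y^{b}|\nabla u_e|^{2}\;\le\;\varepsilon\!\int_{B_{4R}}\! y^{b}|\Delta_b u_e|^{2}\;+\;C_\varepsilon R^{-4}\!\int_{B_{4R}}\! y^{b}u_e^{2},
\]
together with an iteration in $R$ and an a priori (crude) polynomial bound. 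Your sentence ``absorbing once more and invoking the weighted $L^2$ bound of Lemma~\ref{finalue2} closes the estimate'' is right in spirit but should be read as one of these two devices rather than a single Young step.
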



\section{Blow-down analysis}
This section is devoted to the proof of Theorem \ref{mainthm}.  The methods and ideas are strongly motivated by the ones given in \cite{ddww,ddw}.
\\
\noindent{\it  Proof of Theorem \ref{mainthm}}: Let $u$ be a stable solution of (\ref{main}) and let $u_e$ be its extension solving \eqref{maine}. For the case $1<p\le p_{S}(n,a)$ the conclusion follows from the Pohozaev identity.  Note that for the subcritical case Lemma \ref{lowest} implies that $u\in \dot H ^s(\mathbb R^n)\cap L^{p+1}(\mathbb R^n)$. Multiplying (\ref{main}) with $u$ and doing integration, we obtain
\begin{equation}\label{poho1}\int_{\mathbb R^n} |x|^a u|^{p+1} = || u||^2_{\dot H^s(\mathbb R^n)}.
\end{equation}
In addition multiplying (\ref{main}) with $u^\lambda(x)=u(\lambda x)$ yields
\begin{equation}
 \int_{\mathbb R^n} |x|^a |u|^{p-1} u^\lambda = \int_{\mathbb R^n} (-\Delta )^{s/2} u (-\Delta)^{s/2} u^\lambda=\lambda^s \int_{\mathbb R^n} w w_\lambda,
 \end{equation}
 where $w=(-\Delta )^{s/2} u$.  Following ideas provided in \cite{ddww,rs} and  using the change of variable $z=\sqrt\lambda x$ one can get the following Pohozaev identity
\begin{equation}
 -\frac{n+a}{p+1} \int_{\mathbb R^n} |x|^a |u|^{p+1}= \frac{2s-n}{2} \int_{\mathbb R^n} w^2+ \frac{d}{d\lambda}\vert_{\lambda=1} \int_{\mathbb R^n} w^{\sqrt \lambda} w^{1/\sqrt\lambda} dz=\frac{2s-n}{2} ||u||^2_{\dot H^s(\mathbb R^n)}.
\end{equation}
This equality together and (\ref{poho1}) proves the theorem for the subcritical case.  Now suppose that $p> p_S(n,a)$. We consider two cases; 
\\
\\
{\bf Case 1: when $0<s<1$.} We perform the proof in a few steps.
\\
\noindent {\bf Step 1.} Boundedness of the limit: $\lim_{\lambda\to+\infty} E(u_e,\lambda)<+\infty.$ 
From the fact that $E$ is nondecreasing in $\lambda$, it suffices to show that $E(u_e,\lambda)$ is bounded. Write $E=I+J$, where $I$ is given by \eqref{I} and
\begin{equation}
J(u_e,\lambda) = \lambda^{\frac{2s(p+1)+2a}{p-1}-n-1}\frac{s+a}{p+1}\int_{\partial B_\lambda\cap\r}y^{1-2s}u_e^2\;d\sigma. 
\end{equation}
Note that Lemma \ref{lowest} implies that $I$ is bounded. To show that $E$ is bounded we state the following argument. The nondecreasing property of $E$ yields
\begin{equation}
E( u_e,\lambda ) \le\frac1\lambda \int_{\lambda}^{2\lambda} E(u,t) dt \le C +\lambda^{\frac{2s(p+1)+2a}{p-1}-n-1}\int_{B_{2\lambda}\cap\r}y^{1-2s} u_e^2.
\end{equation}
 From Lemma \ref{finalue2} we conclude that $E$ is bounded.
\\
\noindent {\bf Step 2.} There exists a sequence $\lambda_{i}\to+\infty$ such that $( u_e^{\lambda_{i}})$ converges weakly in $H^1_{loc}(\r;y^{1-2s}dydx)$ to a function $u_e^\infty$. This follows from the fact that $(u_e^{\lambda_{i}})$ is bounded in $H^1_{loc}(\r;y^{1-2s}dxdy)$ by Lemma \ref{lowest}.
\\
\noindent {\bf Step 3.\quad}$u_e^\infty$ is homogeneous.   To show this, we apply the scaling invariance property of $E$, its finiteness and the monotonicity formula. Let  $R_{2}>R_{1}>0$, then 
\begin{eqnarray*}
0
&=&\lim\limits_{n\to+\infty}E(u_e,\lambda_{i} R_{2})-E(u_e,\lambda_{i} R_{1})\\
&=&\lim\limits_{n\to+\infty}E(u_e^{\lambda_{i}},R_{2})-E( u_e^{\lambda_{i}},R_{1})\\
&\geq&\liminf\limits_{n\to+\infty}\int_{(B_{R_{2}}\setminus
B_{R_{1}})\cap\r}y^{1-2s}r^{2-n+\frac{4s+2a}{p-1}}\left(\frac{2s+a}{p-1}\frac{u_e^{\lambda_{i}}}{r} +\frac{\partial u_e^{\lambda_{i}}}{\partial r}\right)^2 dx dy\\
&\geq&\int_{(B_{R_{2}}\setminus
B_{R_{1}})\cap\r}y^{1-2s}r^{2-n+\frac{4s+2a}{p-1}}\left(\frac{2s+a}{p-1}\frac{ u_e^\infty}{r} +\frac{\partial
u_e^\infty}{\partial r}\right)^2  dx dy . 
\end{eqnarray*}
Note that in the last inequality we only used the weak convergence
of $(u_e^{\lambda_{i}})$ to $u_e^\infty$ in $H^1_{loc}(\r;y^{1-2s}dxdy)$. So,
\begin{equation}
\frac{2s+a}{p-1}\frac{ u_e^\infty}r
+\frac{\partial u_e^\infty}{\partial
r}=0\quad a.e.~~\text{in}~~\r.
\end{equation}
This implies that $u_e^\infty$ is homogeneous.
\\\noindent {\bf Step 4.} The function $ u_e^\infty$ must be identically zero.   This is in fact a direct consequence of Theorem \ref{homog}.
\\\noindent {\bf Step 5.} The functional sequence $( u_e^{\lambda_{i}})$ converges strongly to zero in $H^1(B_{R}\setminus B_{\eps};y^{1-2s}dxdy)$ and $(u^{\lambda_{i}})$ converges strongly to zero in $L^{p+1}(B_{R}\setminus B_{\eps})$ for all $R>\epsilon>0$.

From Step 2 and Step 3, we have $(u_e^{\lambda_{i}})$ is bounded in $H^1_{loc}(\r;y^{1-2s}dxdy)$ and converges weakly to $0$.  Therefore, $(u_e^{\lambda_{i}})$ converges strongly to zero in $L^2_{loc}(\r;y^{1-2s}dxdy)$. By the standard Rellich-Kondrachov theorem and a diagonal argument, passing to a subsequence, for any  $B_R = B_R(0) \subset \R^{n+1}$ and $A$ of the form  $A = \{ (x,t)\in\r: 0<t< r/2\}$, where $R,r>0$ we obtain
\begin{equation}
\lim_{i\to \infty  }\int_{\r \cap (B_R\setminus A)} y^{1-2s} | u_e^{\lambda_{i}}|^2 \, dx dy\to0.
\end{equation}
From  \cite[Theorem 1.2]{fks} we get 
\begin{equation}
\int_{\r\cap B_r(x)}y^{1-2s} |u_e^{\lambda_{i}}|^2 \, dx dy
\leq
C r^2 \int_{\r\cap B_r(x)}y^{1-2s} |\nabla u_e^{\lambda_{i}}|^2 \, dx dy, 
\end{equation}
for any $x\in \br$, $|x|\leq R$, with a uniform constant $C$. Applying similar arguments provided in \cite{ddw} one can show that  $( u_e^{\lambda_{i}})$ converges strongly to zero  in $H^1_{loc}(\r\setminus\{0\};y^{1-2s}dxdy)$ and the convergence also holds in  $L^{p+1}_{loc}(\R^n\setminus\{0\})$.
\\\noindent {\bf Step 6.} The function $u_e$ vanishes identically to zero. To prove this claim, consider 
\begin{eqnarray*}
I( u_e,\lambda) &=& I( u_e^\lambda,1)
\\&=&
\frac{1}{2}\int_{\r\cap B_{1}} y^{1-2s}\vert\nabla u_e^\lambda\vert^2  dxdy -\frac{\kappa_{s}}{p+1} \int_{\br\cap B_{1}} |x|^a \vert u_e^\lambda\vert^{p+1}dx
\\&=&\frac{1}{2} \int_{\r\cap B_{\epsilon}} y^{1-2s} \vert\nabla u_e^\lambda\vert^2 dx dy - \frac{\kappa_{s}}{p+1} \int_{\br\cap B_{\epsilon}}  |x|^a\vert u_e^\lambda\vert^{p+1}dx
\\&&+\frac{1}{2} \int_{\r\cap B_{1}\setminus B_{\epsilon}} y^{1-2s} \vert\nabla u_e^\lambda\vert^2 dx dy - \frac{\kappa_{s}}{p+1} \int_{\br\cap B_{1}\setminus B_{\epsilon}} |x|^a \vert u_e^\lambda\vert^{p+1}dx
\\&=&\eps^{n-\frac{2s(p+1)+2a}{p-1}} I(u_e,0,\lambda\eps) +\frac{1}{2} \int_{\r\cap B_{1}\setminus B_{\epsilon}} y^{1-2s} \vert\nabla u_e^\lambda\vert^2 dx dy -\frac{\kappa_{s}}{p+1} \int_{\br\cap B_{1}\setminus B_{\epsilon}} |x|^a  \vert  u_e^\lambda\vert^{p+1}dx\\
&\le& C\eps^{n-\frac{2s(p+1)+2a}{p-1}} + \frac{1}{2} \int_{\r\cap B_{1}\setminus B_{\epsilon}} y^{1-2s} \vert\nabla u_e^\lambda\vert^2 dx dy - \frac{\kappa_{s}}{p+1} \int_{\br\cap B_{1}\setminus B_{\epsilon}}   |x|^a \vert  u_e^\lambda\vert^{p+1}dx.
\end{eqnarray*}
Letting $\lambda\to+\infty$ and then $\eps\to0$, we deduce that $\lim_{\lambda\to+\infty} I( u_e,\lambda) =0.$ Using the monotonicity property of $E$, we get 
\begin{equation}
E(u_e,\lambda) \le \frac1\lambda\int_{\lambda}^{2\lambda}E(t)\;dt\le \sup_{[\lambda,2\lambda]}I + C\lambda^{-n-1+\frac{2s(p+1)+2s}{p-1}}\int_{B_{2\lambda}\setminus B_{\lambda}}u_e^2. 
\end{equation}
Therefore, $\lim_{\lambda\to+\infty}E(u_e,\lambda) =0.$ Note that $u$ is smooth and also  $E(u_e,0)=0$. Since $E$ is monotone, $E$ is identically zero.  Therefore, $u_e$ must be homogeneous that is a contradiction unless $u_e\equiv0$.
\\
\\
{\bf Case 2: when $1<s<2$.} Proof of this case is very similar to Case 1.  We perform the proof in a few steps.
\\
\noindent {\bf Step 1.} Finiteness of the limit:  $\lim_{\lambda\to\infty} E(u_e,\lambda)<\infty$.    Theorem \ref{mono} implies that  $E$ is nondecreasing. So, we only need to show that $E(u_e,\lambda)$ is bounded.  Note that
\begin{equation}
E(u_e,\lambda) \le \frac{1}{\lambda} \int_\lambda^{2\lambda}  E(u_e,t) dt  \le \frac{1}{\lambda^2} \int_\lambda^{2\lambda} \int_t^{t+\lambda} E(u_e,\gamma) d\gamma dt .
\end{equation}
From Lemma \ref{lowest} we conclude that
\begin{equation}
 \frac{1}{\lambda^2} \int_\lambda^{2\lambda} \int_t^{t+\lambda}   \gamma^{2s\frac{p+1}{p-1}-n} \left(   \int_{  \mathbb{R}^{n+1}_{+}\cap B_\gamma} \frac{1}{2} y^{3-2s}|\Delta_b u_e|^2 dy dx-  \frac{C_{n,s}}{p+1} \int_{  \partial\mathbb{R}^{n+1}_{+}\cap B_\gamma} |x|^a u_e^{p+1}  dx \right)  d\gamma dt \le C,
 \end{equation}
where $C>0$ is independent from $\lambda$. For the other term in the energy we have
\begin{eqnarray*}\label{}
&&\frac{1}{\lambda^2} \int_\lambda^{2\lambda} \int_t^{t+\lambda}  \left(\gamma^{-3+2s+\frac{4s+2a}{p-1}-n}  \int_{  \mathbb{R}^{n+1}_{+}\cap \partial B_\gamma} y^{3-2s} u_e^2 dydx\right)d\gamma dt
\\ &\le&\frac{1}{\lambda^2} \int_\lambda^{2\lambda} t^{-3+2s+\frac{4s+2a}{p-1}-n} \int_{B_{t+\lambda}\setminus B_t}   y^{3-2s} u_e^2 dydx  dt
\\&\le& \frac{1}{\lambda^2} \int_\lambda^{2\lambda} t^{-3+2s+\frac{4s+2a}{p-1}-n}\left( \int_{B_{3\lambda}}   y^{3-2s} u_e^2 dydx \right) dt
\\&\le& \lambda^{n+4-\frac{2s(p+1)+2a}{p-1}}\frac{1}{\lambda^2} \int_\lambda^{2\lambda} t^{-3+2s+\frac{4s+2a}{p-1}-n} dt
\\&\le& C, 
\end{eqnarray*}
 and $C>0$ is independent from $\lambda$. Note that to deduce above estimates we applied  Lemma \ref{finalue2}.  For the other term in the energy, we have
 \begin{eqnarray*}\label{}
&&\frac{1}{\lambda^2} \int_\lambda^{2\lambda} \int_t^{t+\lambda} \frac{\gamma^3}{2} \frac{d}{d \gamma} \left[ \gamma^{2s-3-n+\frac{4s+2a}{p-1}} \int_{\partial B_\gamma} y^{3-2s} \left( \frac{2s+a}{p-1} \gamma^{-1} u_e+\frac{\partial u_e}{\partial r} \right)^2  \right] d\gamma dt
\\&=& \frac{1}{2\lambda^2} \int_\lambda^{2\lambda} [(t+\lambda)^{2s-n+\frac{4s+2a}{p-1}} \int_{\partial B_{t+\lambda}} y^{3-2s} \left( \frac{2s+a}{p-1} (t+\lambda)^{-1} u_e+\frac{\partial u_e}{\partial r} \right)^2
\\&&
- t^{2s-n+\frac{4s+2a}{p-1}}  \int_{\partial B_{\lambda}} y^{3-2s} \left( \frac{2s+a}{p-1} \gamma^{-1} u_e+\frac{\partial u_e}{\partial r} \right)^2 ] dt
\\&&-\frac{3}{2\lambda^2} \int_\lambda^{2\lambda} \int_t^{t+\lambda}\left[ \gamma^{2s-1-n+\frac{4s+2a}{p-1}}  \int_{\partial B_{\gamma}}  y^{3-2s} \left( \frac{2s+a}{p-1} \gamma^{-1} u_e+\frac{\partial u_e}{\partial r} \right)^2\right] d\gamma dt
\\&\le& \lambda^{-2+2s -n +\frac{4s+2a}{p-1}} \int_{B_{3\lambda}\setminus B_\lambda} y^{3-2s} \left( \frac{2s+a}{p-1} \lambda^{-1} u_e+\frac{\partial u_e}{\partial r} \right)^2 \le C ,
 \end{eqnarray*}
and again  $C>0$ is independent from $\lambda$. The rest of the terms can be treated similarly.
 \\
 \noindent {\bf Step 2.}  There exists a sequence $\lambda_i\to\infty$ such that $(u_e^{\lambda_i})$ converges weakly in $H^1_{loc}(\mathbb R^n, y^{3-2s} dxdy)$ to a function $u_e^\infty$.   Note that this is in fact a direct consequence of Lemma \ref{lowest}.
 \\
 \noindent {\bf Step 3.} The limit function $u_e^\infty$ is homogeneous and therefore it must be identically zero.  To prove this claim, we apply the scaling invariance property of $E$, its finiteness and the monotonicity formula, just like in the case of $0<s<1$. Suppose that  $R_2>R_1>0$, then 
  \begin{eqnarray*}\label{}
  0&=& \lim_{i\to\infty} \left(E(u_e,R_2\lambda_i)- E(u_e,R_1\lambda_i) \right)
\\&=&    \lim_{i\to\infty} \left(E(u_e^{\lambda_i},R_2)- E(u_e^{\lambda_i},R_1) \right)
\\&\ge&  \liminf_{i\to\infty} \int_{(B_{R_2} \setminus B_{R_1})\cap \mathbb R^{n+1}_+} y^{3-2s}  r^{\frac{4s+2a}{p-1}+2s-2-n}   \left(  \frac{2s+a}{p-1} r^{-1} u_e^{\lambda_i}+ \frac{\partial u_e^{\lambda_i}}{\partial r}\right)^2 dy dx
\\&\ge&  \int_{(B_{R_2} \setminus B_{R_1})\cap \mathbb R^{n+1}_+} y^{3-2s}  r^{\frac{4s+2a}{p-1}+2s-2-n}   \left(  \frac{2s+a}{p-1} r^{-1} u_e^{\infty}+ \frac{\partial u_e^{\infty}}{\partial r}\right)^2 dy dx
   \end{eqnarray*}
In the last inequality we have used the weak convergence of $(u_e^{\lambda_i})$ to $u_e^\infty$ in $H^1_{loc}(\mathbb R^n,y^{3-2s} dydx)$. This implies
\begin{equation}
  \frac{2s+a}{p-1} r^{-1} u_e^{\infty}+ \frac{\partial u_e^{\infty}}{\partial r}=0 \ \ \text{a.e. \ \ in} \ \ \mathbb R_+^{n+1}.
  \end{equation}
  Therefore, $u_e^\infty$ is homogeneous.  Apply Theorem \ref{homog} we get  $u_e^\infty=0$.
   \\
 \noindent {\bf Step 5.} The sequence $(u_e^{\lambda_i})$ converges strongly to zero in $H^1(B_R\setminus B_\epsilon, y^{3-2s} dydx)$ and $(u_e^{\lambda_i})$ converges strongly to zero in $L^{p+1}(B_R\setminus B_\epsilon)$ for all $R>\epsilon>0$.
\\\noindent {\bf Step 6.} Finally we claim that $u_e$ must be identically zero. We now apply the scaling invariance property and also elliptic estimates to show that 
\begin{eqnarray*}
I( u_e,\lambda) &=& I( u_e^\lambda,1)
\\&=&
\frac{1}{2}\int_{\r\cap B_{1}} y^{3-2s}\vert\Delta_b u_e^\lambda\vert^2  dxdy -\frac{\kappa_{s}}{p+1} \int_{\br\cap B_{1}} |x|^a \vert u_e^\lambda\vert^{p+1}dx
\\&=&\frac{1}{2} \int_{\r\cap B_{\epsilon}} y^{3-2s} \vert \Delta_b u_e^\lambda\vert^2 dx dy - \frac{\kappa_{s}}{p+1} \int_{\br\cap B_{\epsilon}}  |x|^a\vert u_e^\lambda\vert^{p+1}dx
\\&&+\frac{1}{2} \int_{\r\cap B_{1}\setminus B_{\epsilon}} y^{3-2s} \vert\Delta_b u_e^\lambda\vert^2 dx dy - \frac{\kappa_{s}}{p+1} \int_{\br\cap B_{1}\setminus B_{\epsilon}} |x|^a \vert u_e^\lambda\vert^{p+1}dx
\\&=&\eps^{n-\frac{2s(p+1)+2a}{p-1}} I(u_e,\lambda\eps) +\frac{1}{2} \int_{\r\cap B_{1}\setminus B_{\epsilon}} y^{3-2s} \vert\Delta_b u_e^\lambda\vert^2 dx dy -\frac{\kappa_{s}}{p+1} \int_{\br\cap B_{1}\setminus B_{\epsilon}} |x|^a  \vert  u_e^\lambda\vert^{p+1}dx\\
&\le& C\eps^{n-\frac{2s(p+1)+2a}{p-1}} + \frac{1}{2} \int_{\r\cap B_{1}\setminus B_{\epsilon}} y^{3-2s} \vert\Delta u_e^\lambda\vert^2 dx dy - \frac{\kappa_{s}}{p+1} \int_{\br\cap B_{1}\setminus B_{\epsilon}}   |x|^a \vert  u_e^\lambda\vert^{p+1}dx. 
\end{eqnarray*}
Letting $\lambda\to+\infty$ and then $\eps\to0$, we conclude 
$
\lim_{\lambda\to+\infty} I( u_e,\lambda) =0.
$
Using the monotonicity property of $E$, we obtain 
\begin{equation}
E(u_e,\lambda) \le \frac1\lambda\int_{\lambda}^{2\lambda}E(t)\;dt\le \sup_{[\lambda,2\lambda]}I + C\lambda^{-n-1+\frac{2s(p+1)+2a}{p-1}}\int_{B_{2\lambda}\setminus B_{\lambda}}u_e^2\\
\end{equation}
and so
$
\lim_{\lambda\to+\infty}E(u_e,\lambda) =0.$ Note that  $u$ is smooth and $E(u_e,0)=0$. Since $E$ is monotone, $E$ must be identically zero. This implies that $ u_e$ must be homogeneous. Therefore, $u_e$ must vanishes identically to zero. 
  
   \hfill $ \Box$ 
\\
\\
\\
{\it\bf  Acknowledgement.} The first author is grateful to Pacific Institute for Mathematical Sciences (PIMS) at UBC for the hospitality during his visits.

\end{document}